\documentclass[12pt,reqno]{amsart}

\usepackage{aliascnt,amsmath,amssymb,amsthm,amsfonts,enumerate,xcolor}
\usepackage{stmaryrd}
\usepackage[abbrev]{amsrefs}
\usepackage[T1]{fontenc}
\usepackage[marginparwidth=0pt,margin=24truemm]{geometry}

\definecolor{mylinkcolor}{RGB}{16, 156, 81}
\definecolor{mycitecolor}{RGB}{20, 80, 140}
\usepackage{hyperref}
\usepackage[nameinlink]{cleveref}
\hypersetup{
 setpagesize=false,
 bookmarksnumbered=true,
 bookmarksopen=true,
 hypertexnames=false,
 colorlinks=true,
 linkcolor=mylinkcolor,
 citecolor=mycitecolor,
}
\usepackage{autonum}

\theoremstyle{plain}

\crefname{theorem}{Theorem}{Theorems}
\newaliascnt{lemma}{theorem}
\newtheorem{lemma}[lemma]{Lemma}
\aliascntresetthe{lemma}
\crefname{lemma}{Lemma}{Lemmas}
\newaliascnt{proposition}{theorem}
\newtheorem{proposition}[proposition]{Proposition}
\aliascntresetthe{proposition}
\crefname{proposition}{Proposition}{Propositions}
\newaliascnt{corollary}{theorem}
\newtheorem{corollary}[corollary]{Corollary}
\aliascntresetthe{corollary}
\crefname{corollary}{Corollary}{Corollaries}
\theoremstyle{definition}
\newaliascnt{definition}{theorem}
\newtheorem{definition}[definition]{Definition}
\aliascntresetthe{definition}
\crefname{definition}{Definition}{Definitions}
\theoremstyle{remark}
\newaliascnt{remark}{theorem}
\newtheorem{remark}[remark]{Remark}
\aliascntresetthe{remark}
\crefname{remark}{Remark}{Remarks}
\theoremstyle{plain}
\newtheoremstyle{main}{\topsep}{\topsep}{\itshape}{}{\bfseries}{.}{.5em}{\thmname{#1}}
\theoremstyle{main}
\newtheorem{mainthm}{Main Theorem}
\crefformat{mainthm}{#2Main Theorem#3}
\Crefformat{mainthm}{#2Main Theorem#3}
\theoremstyle{plain}

\allowdisplaybreaks[2]
\newcommand{\Z}{\mathbb Z}
\newcommand{\Q}{\mathbb Q}
\newcommand{\cA}{\mathcal A}
\newcommand{\bq}[1]{\langle #1\rangle_q}
\newcommand{\fp}{\Z / p \Z}
\newcommand\quotient[2]{
	\mathchoice
{\text{\raise1ex\hbox{$#1$}\Big/\lower1ex\hbox{$#2$}}}
	{#1\,/\,#2}
	{#1\,/\,#2}
	{#1\,/\,#2}
}

\title{A $q$-recurrence for a finite Ap\'ery limit}
\author{Henrik Bachmann}
\address{Graduate School of Mathematics, Nagoya University, Nagoya, Japan.}
\email{henrik.bachmann@math.nagoya-u.ac.jp}
\date{\today}
\subjclass[2020]{Primary 11B83, Secondary 11M32}
\keywords{Ap\'ery limits, finite multiple zeta values, Bernoulli numbers, P-recursive sequences}

\begin{document}
\begin{abstract}
The Kaneko--Zagier conjecture predicts a correspondence between finite and symmetric multiple zeta values. Under this correspondence, $\zeta(3)$ corresponds to an element $Z(3)$ defined by Bernoulli numbers. We prove a conjecture of Tasaka relating $Z(3)$ to the quotient of two solutions of a recurrence. A two-index $q$-recurrence connects this quotient to a finite harmonic $q$-series. Using a method of the author, Takeyama, and Tasaka, we obtain the algebraic and analytic limits $3Z(3)/4$ and $3\zeta(3)/4$ at roots of unity.
\end{abstract}
\maketitle

\section{Introduction}

A limit of quotients of two solutions of a second-order recurrence is called an \emph{Ap\'ery limit}, as in Ap\'ery's proof of the irrationality of $\zeta(3)$ \cite{A}. We consider the sequences $A=(A_n)_{n\geq0}$ and $C=(C_n)_{n\geq0}$ satisfying, for $n\geq1$, the recurrence\footnote{This is entry no.~28 in \cite{AESZ}.}
\begin{equation}\label{eq:panzer}
 4n^2(16n^2-1)u_{n-1}
 -(65n^4+130n^3+105n^2+40n+6)u_n
 +(n+1)^4u_{n+1}=0\,.
\end{equation}
With initial values $(A_0,A_1)=(1,6)$ and $(C_0,C_1)=(0,1)$, the sequences begin
\begin{equation}\label{eq:initials}
\begin{aligned}
 A&=(1,6,126,3948,149310,6300756,\ldots),\\
 C&=\left(0,1,\frac{173}{8},\frac{73219}{108},\frac{4922855}{192},\frac{38951317163}{36000},\ldots\right).
\end{aligned}
\end{equation}
During his work with Yeats on Martin sequences \cite{PY}, Panzer numerically observed and conjectured the formula
\begin{equation}\label{eq:real-limit}
 \lim_{n\longrightarrow\infty}\frac{C_n}{A_n}
 =\frac{\zeta(3)}7\,.
\end{equation}
This limit is proved in \cite{ST}, and we give an independent proof here as a consequence of our construction (see \cref{cor:classical-limit}).

In \cite{T}, Tasaka conjectured a finite analogue of \eqref{eq:real-limit} in the ring of ``poor man's ad\`eles''
\[
 \cA=\quotient{\prod_{p\text{ prime}}\fp\,}{\bigoplus_{p\text{ prime}}\fp}\,.
\]
Two families are identified if they agree at all but finitely many primes. For odd $k\geq3$, put
\[
 Z(k)=\left(\frac{B_{p-k}}k\bmod p\right)_p\in\cA\,,
\]
where $B_n$ is the $n$-th Bernoulli number, and we write $Z(k)_p$ for the component of $Z(k)$ at $p$. The components with $p\leq k$ may be chosen arbitrarily. We expect $Z(k)\neq0$ for every odd $k\geq3$, but this remains open for every such $k$ (see \cite{S}*{Conjecture~3}). In \cite{KZ}, Kaneko and Zagier regard $Z(3)$ as the $\cA$-analogue of $\zeta(3)$, and under their conjectural correspondence with symmetric multiple zeta values it plays the role of $\zeta(3)$ modulo $\zeta(2)$. Based on numerical experiments, Tasaka conjectured that
\[
 \left(\frac{C_{p-1}}{A_{p-1}}\right)_p=\frac{2}{21}Z(3)\,.
\]
Sun has independently communicated a proof of this identity \cite{Su}.

Our proof uses the method of the author, Takeyama, and Tasaka in \cite{BTT}, based on algebraic and analytic limits of finite harmonic $q$-series at roots of unity. In this work, we relate the algebraic limit to the recurrence sequences $A$ and $C$. For this, we consider the sums
\[
 E_m(q)=\sum_{n=1}^m\frac{(-1)^{n-1}q^{2n}}{[n]_q^3},
 \qquad [n]_q=\frac{1-q^n}{1-q}.
\]
At $q=1$, we have $E_m(1)=-\zeta(\overline{3})_m$, where $\zeta(\overline{3})_m=\sum_{n=1}^m(-1)^n/n^3$ is the truncated alternating zeta value. For odd $N\geq5$, put $q_N=e^{2\pi i/N}$ and define
\[
 \mathcal E_N=E_{(N-1)/2}(q_N)\,.
\]
As in \cite{BTT}, for a prime $p$ we identify the residue field modulo $1-q_p$ with $\fp$. The algebraic limit is obtained by reducing $\mathcal E_p$ modulo $1-q_p$ for sufficiently large primes $p$ and collecting these residues in $\cA$. For the analytic limit, we regard $\mathcal E_N$ as a complex number and let $N$ tend to infinity through odd integers. Our main result is the following.

\medskip
\begin{mainthm}\label{thm:main}
\begin{enumerate}[(i)]
\item For every prime $p\geq11$,
\begin{equation}\label{eq:strong-main}
 A_{p-1}\equiv-3p\pmod {p^2},\qquad
 C_{p-1}\equiv-\frac{2p}{21}B_{p-3}\pmod {p^2}.
\end{equation}
We also have
\[
 \mathcal E_p\equiv\frac14B_{p-3}\pmod{1-q_p}.
\]
In particular, the algebraic limit is
\[
 \left(\mathcal E_p\bmod(1-q_p)\right)_p
 =\frac{63}{8}\left(\frac{C_{p-1}}{A_{p-1}}\right)_p
 =\frac34Z(3)\,.
\]
\item The analytic limit is
\[
 \lim_{\substack{N\to\infty\\N\ \mathrm{odd}}}\mathcal E_N
 =\frac34\zeta(3)\,.
\]
\end{enumerate}
\end{mainthm}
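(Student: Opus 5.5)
The plan is to build a $q$-deformation of the recurrence \eqref{eq:panzer} that links $A$, $C$ and the sums $E_m(q)$. Concretely, I would look for two-index $q$-sequences $A_{n,m}(q)$ and $C_{n,m}(q)$ with the following properties:
\begin{enumerate}[(a)]
\item they satisfy, in both indices, first-order $q$-difference relations whose $q\to1$ specialization at $m=n$ reproduces \eqref{eq:panzer};
\item the Ap\'ery-type identity $C_{n,m}(q)=A_{n,m}(q)\,E_m(q)+R_{n,m}(q)$ holds, where $R_{n,m}(q)$ is an explicit finite sum that vanishes or is controlled at $m=n$.
\end{enumerate}
The natural candidates for $A_{n,m}$ are $q$-analogues of binomial sums $\sum_k\binom{n}{k}^2\binom{n+k}{k}^2\cdots$ of the shape that produce $A$, namely $q$-binomial products with a sign $(-1)^k$ and powers of $q$ matching the numerator $(-1)^{n-1}q^{2n}$. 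The two-index recurrence would be proved by creative telescoping (a $q$-Zeilberger certificate) in each index separately. Specialising at $q=1$ and $m=n$ then recovers the pair $(A_n,C_n)$, after fitting the normalisation to the initial values \eqref{eq:initials}.

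For (i), I would work at a prime $p$ with $N=p$ and $m=(p-1)/2$. Taking $n=p-1$ (or $n=m$, using the two-index relations to move between them), the $q$-binomial coefficients at $q=q_p$ reduce modulo $1-q_p$ to ordinary binomial coefficients modulo $p$. Combining this with Kummer-type congruences $\binom{p-1}{k}\equiv(-1)^k$ and the $p$-adic expansions $\binom{p-1+k}{k}\equiv1+pH_k$, I would derive $A_{p-1}\equiv-3p\pmod{p^2}$. For $C_{p-1}$, the residue modulo $p^2$ reduces to truncated harmonic sums such as $\sum_{k<p/2}(-1)^k/k^3$ and $H_{(p-1)/2}^{(3)}$ modulo $p$. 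These are classically expressible by $B_{p-3}$ through Glaisher/Lehmer-type congruences, and this is where the factor $-\tfrac{2}{21}B_{p-3}$ should appear.

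Separately, I would reduce $\mathcal E_p$ modulo $1-q_p$ using $q_p\equiv1$ and $[n]_{q_p}\equiv n$. This gives $\mathcal E_p\equiv-\zeta(\overline3)_{(p-1)/2}\bmod p$, which by Lehmer's congruences equals $\tfrac14B_{p-3}$. The two displayed algebraic-limit equalities then follow directly:
\[
\frac{63}{8}\cdot\frac{2}{21}\cdot\frac{B_{p-3}}{3}
=\frac{B_{p-3}}{4}
=\frac34\cdot\frac{B_{p-3}}{3}.
\]
The role of the $q$-recurrence here is mainly conceptual, identifying $\mathcal E_p$ with the quotient $C_{p-1}/A_{p-1}$. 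If the congruences are done directly, that identification is a consistency check.

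For (ii), I would follow \cite{BTT}. Write $[n]_{q_N}=q_N^{(n-1)/2}\sin(\pi n/N)/\sin(\pi/N)$. The summand then becomes
\[
(-1)^{n-1}\,q_N^{2n-3(n-1)/2}\,\frac{\sin^3(\pi/N)}{\sin^3(\pi n/N)}.
\]
For fixed $n$ and $N\to\infty$ this tends to $(-1)^{n-1}/n^3$, and the tail over $n\le(N-1)/2$ is dominated by $C/n^3$ uniformly, since $\sin(\pi n/N)\ge 2n/N$ on that range. Dominated convergence then gives the limit $-\zeta(\overline3)=\tfrac34\zeta(3)$. The main obstacle I expect is part (i) for $C_{p-1}$ modulo $p^2$: it requires an explicit closed form for $C_n$ (a double sum) and a careful bookkeeping of $p$-adic denominators, where $k$ near $p$ introduces factors of $p$. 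I would first find that explicit formula via the $q$-recurrence at $q=1$, then split the sum at $k<p/2$ versus $k>p/2$ and use the symmetry $k\mapsto p-1-k$.
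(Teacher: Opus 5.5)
\textbf{Verdict: genuine gap.} The two congruences modulo $p^2$, which carry the content of part (i), are not proved.

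\textbf{What is fine.} Your reduction of $\mathcal E_p$ modulo $1-q_p$ to $E_{(p-1)/2}\equiv\frac14B_{p-3}$ is correct, and so is the closing arithmetic. Part (ii) is also correct and is essentially the paper's dominated-convergence argument. You are also right that once the three congruences are known, the algebraic-limit identities are pure arithmetic.

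\textbf{The congruence for $C_{p-1}$.} The two-index family in (a)--(b) is never defined, so nothing can be drawn from it. Even if it existed, specializing at $q=q_p$ and reducing modulo $1-q_p$ only detects residues modulo $p$, so it cannot yield statements modulo $p^2$. For $C_{p-1}$ you defer everything to an explicit double-sum formula for $C_n$ that you do not have. That congruence is the entire content of Tasaka's conjecture, so the plan does not establish it. The paper needs no closed form for $C_n$; it proceeds in three steps.
\begin{enumerate}[(1)]
\item At the quarter index $r$, where $a_r\equiv0\pmod p$, factors equal to $p$ in $\rho_{m+1}^{(k)}$ (and, for $p=4r-1$, in $\beta_{r+1}^{(r-1)}$) kill all but one chain of the $k$-recursion modulo $p$. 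Hence $U_r^{(r)}$ is a unit multiple of $U_M^{(0)}$, and $\lambda=C_r/A_r\equiv\frac4{21}E_M\equiv\frac1{21}B_{p-3}$.
\item The recurrence modulo $p$ then freezes $C_n\equiv\lambda A_n$ for $n\geq r$. Running it backwards from $p\mid A_{p-1}$ forces $p\mid A_n,C_n$ for $n\geq s=p-r$.
\item The rescaled reflection $\widehat u_k=G_ku_{p-1-k}/p$ solves the same recurrence modulo $p$. The Wronskian closed form gives $p^3C_p\equiv1$, hence $\widehat A_k\equiv-3A_k$ and $\widehat C_k\equiv(C_{p-1}/p)A_k-C_k$. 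Freezing at $n=s-1$ gives $\widehat C_r\equiv\lambda\widehat A_r$, and this yields $C_{p-1}/p\equiv-2\lambda$.
\end{enumerate}
None of these ingredients (the quarter index, the freezing, the reflection combined with the Wronskian) appears in your plan.

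\textbf{The congruence for $A_{p-1}$.} Your argument also fails as written.
\begin{enumerate}[(1)]
\item The shape $\sum_k\binom nk^2\binom{n+k}k^2$ is Ap\'ery's sequence for $\zeta(3)$. At $n=p-1$ it is $\equiv1\pmod{p^2}$, so it cannot be this $A$.
\item The congruence $\binom{p-1+k}{k}\equiv1+pH_k$ is false. For $1\leq k\leq p-1$ one has $\binom{p-1+k}{k}\equiv p/k\pmod{p^2}$; the expansion $1+pH_k$ belongs to $\binom{p+k}{k}$.
\item $\binom{p-1}{k}\equiv(-1)^k$ holds only modulo $p$. Modulo $p^2$ you need $(-1)^k(1-pH_k)$.
\end{enumerate}
The missing input is the representation $A_n=\sum_{i,j}\binom ni^2\binom nj^2\binom{i+j}n^2$. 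At $n=p-1$, Lucas' theorem gives $p\mid\binom{p-1+t}{p-1}$ for $1\leq t\leq p-1$, so only $i+j=p-1$ survives modulo $p^2$. This leaves $\sum_i\binom{p-1}{i}^4\equiv p-4p\sum_{i=0}^{p-1}H_i\equiv-3p$.
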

\noindent In particular, we get $\left(C_{p-1}/A_{p-1}\right)_p=\frac{2}{21}Z(3)$, which is Tasaka's conjecture. Starting from the alternating sums $E_m(1)$, we construct the family $U_m^{(k)}\in\Q^2$ in \cref{def:classical-family}. Its diagonal, i.e. the pairs $U_n^{(n)}$, recovers $A_n$ and $C_n$ up to explicit factors. For the $q$-version in \cref{def:q-family}, we replace the integer factors in the recursion by $q$-integers. At $q=q_p$, the factors $[p]_{q_p}$ vanish, and this reduces a quotient of coordinates to $\mathcal E_p$.

The root-of-unity limits extend to every odd weight $w\geq3$ in \Cref{sec:odd-weights}, although the connection with \eqref{eq:panzer} remains special to weight three.

\section*{Acknowledgments}
The author thanks Koji Tasaka and Erik Panzer for helpful comments. This project was partially supported by JSPS KAKENHI Grant 26K22254.

\section{From the alternating sum to the Ap\'ery recurrence}\label{sec:construction}

In this section, we construct the two-index family whose diagonal gives $A_n$ and $C_n$. We begin with the alternating sums and put
\begin{equation}\label{eq:E-h}
 E_m=\sum_{n=1}^m\frac{(-1)^{n-1}}{n^3},\qquad
 h_m=\frac{(m!)^2(2m)!}{8^m},
\end{equation}
where $E_0=0$. The factor $h_m$ is chosen so that $h_mE_m$ and $h_m$ satisfy the same second-order recurrence. For $k,m\geq0$, define the coefficients
\begin{align}
 \alpha_{m+1}^{(k)}
 &=\frac{(2k+1)(2m+1)(3m^2+3m+1)}4,\label{eq:alpha}\\
 \beta_{m+1}^{(k)}
 &=\frac{m^6\bigl(4m^2-(2k+1)^2\bigr)}{16}\,.\label{eq:beta}
\end{align}
For the recursive construction, we also put
\begin{align}
 \rho_m^{(k)}
 &=\frac{(m-2k-2)(2m+2k+1)
 (m^2-2(k+1)m+4(k+1)^2)}4,\label{eq:rho}\\
 T_m^{(k)}&=\frac2{2m-2k-1}\,.
 \label{eq:T}
\end{align}
\begin{definition}\label{def:classical-family}
For $m\geq0$, set $U_m^{(0)}=h_m(E_m,1)\in\Q^2$. Define the remaining vectors recursively for $k\geq0$ by
\begin{equation}\label{eq:BM-transform}
 U_m^{(k+1)}=T_m^{(k)}
 \left(U_{m+1}^{(k)}+\rho_{m+1}^{(k)}U_m^{(k)}\right)\,.
\end{equation}
\end{definition}
These choices give the following recurrence for each fixed $k$.

\begin{lemma}\label{lem:BM}
For every fixed $k\geq0$, both coordinates of $U_m^{(k)}$ satisfy
\begin{equation}\label{eq:m-recurrence}
 U_{m+1}^{(k)}=\alpha_{m+1}^{(k)}U_m^{(k)}
                 +\beta_{m+1}^{(k)}U_{m-1}^{(k)}
 \qquad(m\geq1).
\end{equation}
\end{lemma}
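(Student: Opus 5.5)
The plan is to argue by induction on $k$: first check the base case $k=0$ directly from the closed forms of $h_m$ and $E_m$, then show that the transformation \eqref{eq:BM-transform} carries solutions of \eqref{eq:m-recurrence} for parameter $k$ to solutions for parameter $k+1$. Since the recurrence is linear and the transformation acts coordinatewise, it suffices to treat one scalar sequence $u_m$ satisfying the level-$k$ recurrence.

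\emph{Base case $k=0$.} Here $\alpha_{m+1}^{(0)}=(2m+1)(3m^2+3m+1)/4$ and $\beta_{m+1}^{(0)}=m^6(4m^2-1)/16$.
\begin{itemize}
\item[(a)] For the second coordinate $h_m$, we have $h_{m+1}/h_m=(m+1)^3(2m+1)/4$. Dividing the recurrence by $h_{m-1}$ reduces it to
\[
\tfrac{m^3(2m-1)(2m+1)}{16}\,(m+1)^3=\tfrac{m^3(2m-1)(2m+1)}{16}\,\bigl(3m^2+3m+1+m^3\bigr),
\]
which is the identity $(m+1)^3=m^3+3m^2+3m+1$.
\item[(b)] For the first coordinate $h_mE_m$, subtract $E_m$ times the recurrence for $h$. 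The claim then becomes
\[
h_{m+1}(E_{m+1}-E_m)=-\beta_{m+1}^{(0)}h_{m-1}(E_m-E_{m-1}).
\]
Using $E_{m+1}-E_m=(-1)^m/(m+1)^3$ and $h_{m+1}/h_{m-1}=m^3(m+1)^3(4m^2-1)/16$, both sides equal $(-1)^m m^3(4m^2-1)h_{m-1}/16$.
\end{itemize}

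\emph{Inductive step.} Assume $u_m$ satisfies $u_{m+1}=\alpha_{m+1}^{(k)}u_m+\beta_{m+1}^{(k)}u_{m-1}$ for $m\ge1$. Put $v_m=T_m^{(k)}(u_{m+1}+\rho_{m+1}^{(k)}u_m)$; note that $T_m^{(k)}$ is always defined because $2m-2k-1$ is odd.
\begin{itemize}
\item[(1)] Using the level-$k$ recurrence at indices $m$ and $m+1$, eliminate $u_{m+1}$ and $u_{m+2}$. This writes each of $v_{m-1}$, $v_m$, $v_{m+1}$ as an explicit combination $a\,u_m+b\,u_{m-1}$, with coefficients rational in $m$ and $k$.
\item[(2)] The target identity $v_{m+1}=\alpha_{m+1}^{(k+1)}v_m+\beta_{m+1}^{(k+1)}v_{m-1}$ then follows once the coefficients of $u_m$ and of $u_{m-1}$ agree separately. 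This gives two rational-function identities in $(m,k)$. After clearing the denominators coming from $T^{(k)}$, these become polynomial identities that can be checked by expansion.
\item[(3)] For $m=1$, only the level-$k$ recurrence at $m=1,2$ is used, together with $v_0=T_0^{(k)}(u_1+\rho_1^{(k)}u_0)$, so no out-of-range terms occur.
\end{itemize}

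\emph{Expected obstacle.} The main difficulty is the verification of the two polynomial identities in step (2). They are of moderately high degree in $m$ (the $\beta$'s have degree $8$). To keep this manageable, I would first use the factorizations of $\rho^{(k)}$ and $\beta^{(k)}$ to cancel common factors, in particular the factor $(2m-2k-1)$ against $(2m)^2-(2k+1)^2$ in $\beta_{m+1}^{(k)}$. The underlying mechanism is that \eqref{eq:BM-transform} is a gauge transformation: $\rho_{m+1}^{(k)}$ is chosen so that the operator factorization closes. If direct expansion becomes unwieldy, a computer algebra check of the two identities is routine.
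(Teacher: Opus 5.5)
Your proposal is correct and follows essentially the paper's proof. It has the same base-case computations for $h_m$ and $h_mE_m$, and the same induction on $k$: you eliminate the higher $u$'s with the level-$k$ recurrence and compare coefficients in the two-dimensional solution space spanned by $(u_m,u_{m-1})$. The paper organizes the final check more efficiently. It sets $R_m^{(k)}=\alpha_{m+1}^{(k)}+\rho_{m+1}^{(k)}$ and inverts the $2\times2$ system whose determinant is $\Delta_m^{(k)}=\rho_m^{(k)}R_m^{(k)}-\beta_{m+1}^{(k)}=-4(k+1)^6\bigl(4m^2-(2k+1)^2\bigr)$. These factorizations of $R_m^{(k)}$ and $\Delta_m^{(k)}$ are what make the two polynomial identities you leave to expansion short to verify by hand.
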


\begin{proof}
For $k=0$, the second coordinate of $U_m^{(0)}$ is $h_m$, so the claimed recurrence is $h_{m+1}=\alpha_{m+1}^{(0)}h_m+\beta_{m+1}^{(0)}h_{m-1}$. Dividing by $h_m$ and using
\[
 \frac{h_{m+1}}{h_m}=\frac{(m+1)^3(2m+1)}4,
 \qquad
 \frac{h_m}{h_{m-1}}=\frac{m^3(2m-1)}4,
\]
this becomes
\[
 \frac{(m+1)^3(2m+1)}4
 =\frac{(2m+1)(3m^2+3m+1)}4+\frac{m^3(2m+1)}4\,.
\]
After cancelling $(2m+1)/4$, this is the binomial identity $(m+1)^3=m^3+3m^2+3m+1$.

For the first coordinate, subtract $E_m$ times the recurrence just proved for the second coordinate from the desired recurrence. It remains to verify
\[
 h_{m+1}(E_{m+1}-E_m)=\beta_{m+1}^{(0)}h_{m-1}(E_{m-1}-E_m)\,.
\]
Using $E_m-E_{m-1}=(-1)^{m-1}/m^3$, both sides have the common factor $(-1)^m$. Cancelling it reduces the claim to
\[
 \frac{h_{m+1}}{(m+1)^3}=\frac{\beta_{m+1}^{(0)}h_{m-1}}{m^3}\,,
\]
which holds because both sides equal $m^3(4m^2-1)h_{m-1}/16$.

We now check directly that the recurrence is preserved under the step $k\longmapsto k+1$. To keep the elimination formulas short, write
\begin{align}
 R_m^{(k)}&=\alpha_{m+1}^{(k)}+\rho_{m+1}^{(k)},\\
 \Delta_m^{(k)}&=\rho_m^{(k)}R_m^{(k)}-\beta_{m+1}^{(k)}\,.
\end{align}
Direct factorization gives
\begin{align}
 R_m^{(k)}
 &=\frac{(m+2k+2)(2m-2k-1)
 (m^2+2(k+1)m+4(k+1)^2)}4,\label{eq:R-factor}\\
 \Delta_m^{(k)}
 &=-4(k+1)^6\bigl(4m^2-(2k+1)^2\bigr)\,.
 \label{eq:Delta-factor}
\end{align}
Abbreviate $X_m=U_m^{(k+1)}/T_m^{(k)}$. Inserting \eqref{eq:m-recurrence} into \eqref{eq:BM-transform} and writing \eqref{eq:BM-transform} once more with $m$ replaced by $m-1$, we obtain the pair
\begin{equation}\label{eq:elim-system}
 X_m=R_m^{(k)}U_m^{(k)}+\beta_{m+1}^{(k)}U_{m-1}^{(k)},
 \qquad
 X_{m-1}=U_m^{(k)}+\rho_m^{(k)}U_{m-1}^{(k)}.
\end{equation}
Viewed as a linear system in the unknowns $U_m^{(k)}$ and $U_{m-1}^{(k)}$, its determinant is precisely $\Delta_m^{(k)}$. By \eqref{eq:Delta-factor} this determinant never vanishes: $2m$ is even and $2k+1$ is odd, so $4m^2\neq(2k+1)^2$, and \eqref{eq:elim-system} determines $U_m^{(k)}$ and $U_{m-1}^{(k)}$. In particular,
\[
 U_m^{(k)}=\frac{\rho_m^{(k)}X_m-\beta_{m+1}^{(k)}X_{m-1}}{\Delta_m^{(k)}}\,.
\]
On the other hand, eliminating $U_{m+1}^{(k)}$ between the two equations of \eqref{eq:elim-system} taken at $m+1$ gives $X_{m+1}=R_{m+1}^{(k)}X_m-\Delta_{m+1}^{(k)}U_m^{(k)}$. Combining the last two displays and multiplying by $T_{m+1}^{(k)}$ yields
\[
 U_{m+1}^{(k+1)}=\frac{T_{m+1}^{(k)}}{T_m^{(k)}}\left(R_{m+1}^{(k)}-\frac{\Delta_{m+1}^{(k)}}{\Delta_m^{(k)}}\rho_m^{(k)}\right)U_m^{(k+1)}+\frac{T_{m+1}^{(k)}}{T_{m-1}^{(k)}}\frac{\Delta_{m+1}^{(k)}}{\Delta_m^{(k)}}\beta_{m+1}^{(k)}U_{m-1}^{(k+1)}\,.
\]
Substituting \eqref{eq:alpha}--\eqref{eq:Delta-factor}, the two coefficients on the right become
\[
 \alpha_{m+1}^{(k+1)},\qquad \beta_{m+1}^{(k+1)}.
\]
This completes the induction.
\end{proof}

\subsection{The diagonal}
We now relate $U_n^{(n)}$ to the solutions $A_n$ and $C_n$ of \eqref{eq:panzer}. Write $D_n=U_n^{(n)}$.

\begin{proposition}\label{prop:diagonal}
For every $n\geq0$,
\begin{equation}\label{eq:diagonal-identity}
 D_n=\frac{(n!)^7}{2^n}\left(\frac{21}{4}C_n,A_n\right)\,.
\end{equation}
\end{proposition}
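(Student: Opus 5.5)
The plan is to show that the rescaled diagonal $\widetilde D_n=\frac{2^n}{(n!)^7}D_n$ satisfies the recurrence \eqref{eq:panzer}, coordinate by coordinate, and then to match initial values. Since \eqref{eq:panzer} is second order with leading coefficient $(n+1)^4\neq0$, a solution is determined by its values at $n=0,1$. The initial values are direct computations. We have $U_0^{(0)}=h_0(E_0,1)=(0,1)$, so $D_0=(0,1)=(\tfrac{21}{4}C_0,A_0)$. For $n=1$, compute $U_1^{(1)}=T_1^{(0)}(U_2^{(0)}+\rho_2^{(0)}U_1^{(0)})$ from $h_1=1/4$, $h_2=3/4$ (indeed $h_2=4\cdot24/64$), $E_1=1$, $E_2=7/8$ and $\rho_2^{(0)}=(0-2)(5)(4-4+4)/4=-10$. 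This gives $U_2^{(0)}+\rho_2^{(0)}U_1^{(0)}=(21/32-10/4,\,3/4-10/4)=(-59/32,-7/4)$, and with $T_1^{(0)}=2$ we get $D_1=(-59/16,-7/2)$. This does not match $\tfrac12(\tfrac{21}{4},6)=(21/8,3)$, so either I have a sign or normalization slip, or the identity needs the sign of $\rho$/$T$ checked carefully. In the actual proof I would redo this base case cautiously, since it fixes the normalization $\frac{(n!)^7}{2^n}$ and the constant $\tfrac{21}{4}$ (which should come out as $C_1/A_1$ scaled against $E$-data).

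For the recurrence, the key is to relate $D_{n+1}=U_{n+1}^{(n+1)}$ to column $n$. By \eqref{eq:BM-transform},
\[
 U_{n+1}^{(n+1)}=T_{n+1}^{(n)}\bigl(U_{n+2}^{(n)}+\rho_{n+2}^{(n)}U_{n+1}^{(n)}\bigr).
\]
Using \cref{lem:BM} at level $k=n$, I would express $U_{n+2}^{(n)}$ and $U_{n+1}^{(n)}$ in terms of the pair $(U_n^{(n)},U_{n-1}^{(n)})$. Then I would pass back through the elimination formulas from the proof of \cref{lem:BM}: the invertible system \eqref{eq:elim-system} expresses $(U_m^{(k)},U_{m-1}^{(k)})$ through $(X_m,X_{m-1})$, i.e. through $U^{(k+1)}$. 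The cleanest way to organize this is with $2\times2$ transfer matrices. Let $V_n=(U_n^{(n)},U_{n-1}^{(n)})$, a pair of vectors. One diagonal step $V_n\mapsto V_{n+1}$ is the product of three matrices with rational-function entries in $n$: a shift along $m$ using \cref{lem:BM}, the transform \eqref{eq:BM-transform}, and a further shift. The result is a single matrix $M(n)$ with $V_{n+1}=M(n)V_n$.

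A first-order $2\times2$ system $V_{n+1}=M(n)V_n$ implies a second-order scalar recurrence for the first component. It is obtained by eliminating the second component, using $M(n-1)$ and $M(n)$. After substituting $D_n=\frac{(n!)^7}{2^n}\widetilde D_n$, the resulting recurrence should become \eqref{eq:panzer}, possibly up to an overall factor. Both coordinates of $U^{(0)}$ generate solutions of the same linear system, so the same recurrence holds for both coordinates simultaneously. That is why one relation gives both $A$ and $C$.

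The main obstacle is this rational-function elimination. Here \eqref{eq:R-factor} and \eqref{eq:Delta-factor} should make the factors cancel, notably the $(2m-2k-1)$ factors in $T$ and $R$ at $m=k+1$, and the $\Delta$ ratios $\frac{4(n+1)^2-(2n+1)^2}{\cdots}$. I would first attempt the computation symbolically, guided by the factorizations, and check it against the numerical values in \eqref{eq:initials} for $n\le 4$.
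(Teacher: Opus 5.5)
Your overall strategy (rescale the diagonal, derive a three-term recurrence by eliminating an auxiliary quantity, match initial values) is the paper's, and the elimination you sketch does work. The actual failure is the base case. You report $D_1=(-59/16,-7/2)$ and leave the mismatch with $(21/8,3)$ unresolved, so as written the proposal does not prove the statement; it even suggests the statement is false. The discrepancy comes from two arithmetic slips, not from the identity:
\begin{itemize}
\item At $(m,k)=(2,0)$ the first factor of \eqref{eq:rho} is $m-2k-2=0$, so $\rho_2^{(0)}=0$, not $-10$.
\item $h_2=(2!)^2\,4!/8^2=96/64=3/2$, not $3/4$.
\end{itemize}
Hence
\[
 D_1=U_1^{(1)}=T_1^{(0)}U_2^{(0)}=2\cdot\tfrac{3}{2}\bigl(\tfrac{7}{8},1\bigr)=\bigl(\tfrac{21}{8},3\bigr)=\tfrac{1}{2}\bigl(\tfrac{21}{4}C_1,A_1\bigr),
\]
exactly as required.

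A second, smaller gap concerns the range of your recurrence. With $V_n=(U_n^{(n)},U_{n-1}^{(n)})$ you do not need to go back through \eqref{eq:elim-system}. Put $Y_n=U_{n+1}^{(n)}=\frac{(2n+1)^2(3n^2+3n+1)}{4}D_n-\frac{n^6(4n+1)}{16}U_{n-1}^{(n)}$. Then \eqref{eq:BM-transform} at $m=n,n+1$ and \cref{lem:BM} at level $n$ give directly
\[
 D_{n+1}=\frac{21(n+1)^3}{2}Y_n+\frac{(n+1)^6(4n+3)}{8}D_n,\qquad
 U_n^{(n+1)}=-2Y_n+\frac{3(n+1)^3(4n+3)}{2}D_n .
\]
However, \cref{lem:BM} holds only for $m\geq1$, and $V_0$ would involve the undefined $U_{-1}^{(0)}$. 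So $M(n)$ exists only for $n\geq1$, and eliminating through $M(n-1)$ and $M(n)$ gives the three-term recurrence only for $n\geq2$. With just $D_0$ and $D_1$ checked, you have two options:
\begin{itemize}
\item verify $D_2=(3633,4032)$ as a third base case; or
\item do what the paper does and eliminate $U_n^{(n-1)}$ instead, via \eqref{eq:diag-A} and \eqref{eq:diag-B}, which hold for all $n\geq1$.
\end{itemize}
The two auxiliary quantities are related by $U_{n-1}^{(n)}=\frac{12}{n^3}D_n-128\,U_n^{(n-1)}$. Either way the elimination gives exactly \eqref{eq:D-recurrence}, which rescales to \eqref{eq:panzer} with no extra overall factor.
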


\begin{proof}
We first record two relations. The first equation of \eqref{eq:elim-system} at $(m,k)=(n,n-1)$ reads
\[
 \frac{D_n}{T_n^{(n-1)}}=R_n^{(n-1)}U_n^{(n-1)}+\beta_{n+1}^{(n-1)}D_{n-1}\,,
\]
and $T_n^{(n-1)}=2$, while \eqref{eq:R-factor} and \eqref{eq:beta} give $R_n^{(n-1)}=21n^3/4$ and $\beta_{n+1}^{(n-1)}=n^6(4n-1)/16$. This gives
\begin{equation}\label{eq:diag-A}
 D_n
 =\frac{21n^3}{2}U_n^{(n-1)}
 +\frac{n^6(4n-1)}8D_{n-1}
 \qquad(n\geq1).
\end{equation}
The same elimination in \eqref{eq:elim-system}, now at $(m,k)=(n+1,n-1)$, gives
\[
 \frac{U_{n+1}^{(n)}}{T_{n+1}^{(n-1)}}
 =R_{n+1}^{(n-1)}\frac{D_n}{T_n^{(n-1)}}-\Delta_{n+1}^{(n-1)}U_n^{(n-1)}\,.
\]
Here $T_{n+1}^{(n-1)}=2/3$, and \eqref{eq:R-factor} and \eqref{eq:Delta-factor} give $R_{n+1}^{(n-1)}=\frac34(3n+1)(7n^2+4n+1)$ and $\Delta_{n+1}^{(n-1)}=-12n^6(4n+1)$, so that
\begin{equation}\label{eq:diag-B}
 U_{n+1}^{(n)}
 =\frac{(3n+1)(7n^2+4n+1)}4D_n
 +8n^6(4n+1)U_n^{(n-1)}
 \qquad(n\geq1).
\end{equation}
Now take \eqref{eq:diag-A} with $n$ replaced by $n+1$, that is
\[
 D_{n+1}
 =\frac{21(n+1)^3}{2}U_{n+1}^{(n)}
 +\frac{(n+1)^6(4n+3)}8D_n\,,
\]
and use it together with \eqref{eq:diag-A} and \eqref{eq:diag-B} to eliminate $U_{n+1}^{(n)}$ and $U_n^{(n-1)}$. This gives the coordinatewise recurrence
\begin{equation}\label{eq:D-recurrence}
 \frac2{(n+1)^3}D_{n+1}
 -(65n^4+130n^3+105n^2+40n+6)D_n
 +2n^9(16n^2-1)D_{n-1}=0\,.
\end{equation}
The initial values are
\[
 D_0=(0,1),\qquad D_1=(21/8,3).
\]
Since the successive scale factors $(n!)^7/2^n$ have ratio $(n+1)^7/2$, the normalized vector sequence
\[
 \frac{2^n}{(n!)^7}D_n
\]
satisfies \eqref{eq:panzer} coordinatewise. Its first two values are $(0,1)$ and $(21/4,6)$, so \eqref{eq:initials} and uniqueness give \eqref{eq:diagonal-identity}.
\end{proof}

\subsection{The classical limit}
It is a nice feature of the construction that the diagonal identity also gives a proof of Panzer's conjectured limit.

\begin{corollary}\label{cor:classical-limit}
We have
\[
 \lim_{n\to\infty}\frac{C_n}{A_n}=\frac{\zeta(3)}7\,.
\]
\end{corollary}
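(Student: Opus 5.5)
By \cref{prop:diagonal}, $C_n/A_n=\frac{4}{21}\,D_{n,1}/D_{n,2}$, where $D_n=(D_{n,1},D_{n,2})$ are the two coordinates of $U_n^{(n)}$. The claim $\lim C_n/A_n=\zeta(3)/7$ is therefore equivalent to
\[
 \lim_{n\to\infty}\frac{D_{n,1}}{D_{n,2}}=\frac34\zeta(3)=\sum_{n\ge1}\frac{(-1)^{n-1}}{n^3}=\lim_{m\to\infty}E_m .
\]
The plan is to show that the ratio of the two coordinates of $U_m^{(k)}$ tends to $E_\infty:=\frac34\zeta(3)$ for every $k$, uniformly enough to pass to the diagonal.

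\textbf{Step 1: write $U_m^{(k)}$ as a weighted average.} Iterating \eqref{eq:BM-transform} shows that $U_m^{(k)}$ is a linear combination $\sum_{j=0}^{k}c^{(k)}_{m,j}U_{m+j}^{(0)}$ with explicit rational coefficients built from the $T$ and $\rho$ factors. Hence
\[
 U_m^{(k)}=\Bigl(\sum_j w_j E_{m+j},\ \sum_j w_j\Bigr),\qquad w_j=c^{(k)}_{m,j}h_{m+j},
\]
and therefore
\[
 \frac{D_{n,1}}{D_{n,2}}-E_\infty=\frac{\sum_j w_j(E_{n+j}-E_\infty)}{\sum_j w_j}.
\]
Since $|E_{n+j}-E_\infty|\le (n+1)^{-3}$, it would suffice to bound $\sum_j|w_j|$ by a constant times $|\sum_j w_j|$, or even by a subexponential factor times it. The difficulty is that the $w_j$ have mixed signs, because $\rho_m^{(k)}$ changes sign at $m=2k+2$. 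So this crude bound is probably not directly available.

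\textbf{Step 2 (more robust): a Casoratian argument.} Both coordinates of $D_n$ satisfy \eqref{eq:D-recurrence}, so I would compute the Wronskian
\[
 W_n=D_{n,1}D_{n-1,2}-D_{n,2}D_{n-1,1}
\]
in closed form. From \eqref{eq:panzer}, the normalized Casoratian of $(C,A)$ satisfies $(n+1)^4w_{n+1}=4n^2(16n^2-1)\,w_n$. This gives an explicit product with growth of order $64^n/n^{\text{const}}$ times $(n!)^{-4}$-type factors. One then has the telescoping identity
\[
 \frac{C_n}{A_n}=\sum_{j=1}^{n}\frac{w_j}{A_jA_{j-1}},
\]
with all terms of one sign, since $w_1=1$ and the products are positive.

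\textbf{Step 3: convergence and identification of the limit.} The characteristic roots of \eqref{eq:panzer} are the roots of $x^2-65x+64$, namely $64$ and $1$. Poincaré's theorem then gives $A_n$ of size $64^n$ up to polynomial factors, because $A_n>0$ grows. Consequently $w_j/(A_jA_{j-1})\approx 64^{-j}$ and the series converges geometrically, so the limit $L$ exists. It remains to identify $L$ with $\zeta(3)/7$. For this I return to Step 1: for fixed $k$, the ratio of the two coordinates of $U_m^{(k)}$ tends to $E_\infty$ as $m\to\infty$, since this is a finite average with fixed coefficient structure, dominated by the largest $h_{m+j}$.

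The remaining task is the passage from "fixed $k$, $m\to\infty$" to the diagonal $m=k=n$. I would handle it by showing that for fixed $k$ each coordinate sequence in $m$ satisfies \eqref{eq:m-recurrence}. That recurrence has a dominant solution of growth $m^{8}$ per step and a subdominant one. The ratio error then decays like $(\text{subdominant}/\text{dominant})$, and this should survive along $k=m$ via the two-sided recurrences \eqref{eq:diag-A} and \eqref{eq:diag-B}.

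\textbf{Main obstacle.} Controlling this uniformity along the diagonal is where I expect the difficulty. If it fails, a fallback is to recognize $C_n/A_n$ directly as a partial sum of a known rapidly convergent series for $\zeta(3)/7$ by matching the Casoratian terms.
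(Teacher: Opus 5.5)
Your reduction to showing $x_n^{(n)}\to\frac34\zeta(3)$, where $x_m^{(k)}=(U_m^{(k)})_1/(U_m^{(k)})_2$, is fine, and so is your Casoratian proof that $C_n/A_n$ is increasing. The genuine gap is the identification of the limit. The passage from ``fixed $k$, $m\to\infty$'' to the diagonal is only asserted, and the asymptotic route you sketch cannot supply it. In \eqref{eq:m-recurrence} one has $\alpha_{m+1}^{(k)}\sim\frac32(2k+1)m^3$ and $\beta_{m+1}^{(k)}\sim m^8/4$. So the local characteristic roots are roughly $\pm\frac12m^4$, not $m^8$. The dominant and subdominant solutions are therefore separated only by a polynomial factor whose exponent and constants depend on $k$. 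Asymptotics in $m$ for fixed $k$ say nothing at $m=k$, where that regime has not begun.

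Your fixed-$k$ justification is also off. Both $\rho_{m+1}^{(k)}h_m$ and $h_{m+1}$ are of order $\frac12m^4h_m$, so no term is dominated by the largest $h_{m+j}$. The fixed-$k$ limit holds for a different reason: for $m\ge2k+2$ all $T$- and $\rho$-factors in the expansion are positive, so $x_m^{(k)}$ is a convex combination of $E_m,\dots,E_{m+k}$. Next, ``$A_n>0$ grows'' does not exclude $A_{n+1}/A_n\to1$ in Poincar\'e's theorem. You would need something like $A_n\ge2A_{n-1}$, which follows by induction from \eqref{eq:panzer}. The Casoratian fallback is not an argument.

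The missing idea is an exact enclosure rather than an asymptotic estimate. The equivalent form $U_m^{(k+1)}=T_m^{(k)}\bigl(R_m^{(k)}U_m^{(k)}+\beta_{m+1}^{(k)}U_{m-1}^{(k)}\bigr)$, read off from \eqref{eq:elim-system}, has positive coefficients for $m\ge k+1$. Hence the second coordinates are positive and $x_m^{(k)}\in[0,1]$ for $m\ge k$. For $m\ge k+1$ both coefficients in \eqref{eq:m-recurrence} are positive, so $x_{m+1}^{(k)}$ lies between $x_{m-1}^{(k)}$ and $x_m^{(k)}$. Thus the intervals with endpoints $x_m^{(k)}$ and $x_{m+1}^{(k)}$ are nested for $m\ge k$, and all contain the fixed-$k$ limit $\frac34\zeta(3)$. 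Taking $m=k=n$ puts $\frac34\zeta(3)$ between $x_n^{(n)}$ and $x_{n+1}^{(n)}$ for every $n$, with no uniformity in $k$ needed.

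To close the gap between these endpoints, note that \eqref{eq:diag-B} has positive coefficients, so $x_{n+1}^{(n)}$ lies between $x_n^{(n)}$ and $x_n^{(n-1)}$. Also, \eqref{eq:diag-A} together with \cref{prop:diagonal} gives
\[
 x_n^{(n-1)}-x_n^{(n)}=\frac{(4n-1)A_{n-1}}{4nA_n-(4n-1)A_{n-1}}\bigl(x_n^{(n)}-x_{n-1}^{(n-1)}\bigr),
\]
where the fraction lies in $(0,1)$ because $A_n\ge2A_{n-1}$. Since $x_n^{(n)}$ is increasing (your Casoratian) and bounded by $1$, its increments tend to $0$. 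This gives the existence of the limit without Poincar\'e, and also $\bigl|\frac34\zeta(3)-x_n^{(n)}\bigr|\le x_n^{(n)}-x_{n-1}^{(n-1)}\to0$.
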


\begin{proof}
By \eqref{eq:elim-system}, the transformation can also be written as
\[
 U_m^{(k+1)}=T_m^{(k)}
 \left(R_m^{(k)}U_m^{(k)}+\beta_{m+1}^{(k)}U_{m-1}^{(k)}\right)\,.
\]
For $m\geq k+1$, all three coefficients $T_m^{(k)}$, $R_m^{(k)}$, and $\beta_{m+1}^{(k)}$ are positive. Since the initial second coordinate is $h_m>0$, induction shows that the second coordinate of $U_m^{(k)}$ is positive for $m\geq k$. Therefore, we may put
\[
 x_m^{(k)}=\frac{(U_m^{(k)})_1}{(U_m^{(k)})_2}
 \qquad(m\geq k).
\]
The displayed transformation makes $x_m^{(k+1)}$ a weighted average of $x_m^{(k)}$ and $x_{m-1}^{(k)}$ with positive weights. As $x_m^{(0)}=E_m\in[0,1]$, it also proves $x_m^{(k)}\in[0,1]$ for $m\geq k$.

For each fixed $k$, these ratios tend to $3\zeta(3)/4$ as $m\to\infty$. This follows by induction on $k$ from the original transformation \eqref{eq:BM-transform}: for $m\geq2k+2$, both $T_m^{(k)}$ and $\rho_{m+1}^{(k)}$ are positive, so $x_m^{(k+1)}$ lies between $x_m^{(k)}$ and $x_{m+1}^{(k)}$. On the other hand, for $m\geq k+1$, both coefficients in \eqref{eq:m-recurrence} are positive. The closed intervals with endpoints $x_m^{(k)}$ and $x_{m+1}^{(k)}$ are therefore nested for $m\geq k$. Their common limiting value $3\zeta(3)/4$ lies in each of them. In particular, it lies between $x_n^{(n)}$ and $x_{n+1}^{(n)}$.

We next bound the distance between these endpoints. Write $a_n=4n^2(16n^2-1)$ and $P(n)=65n^4+130n^3+105n^2+40n+6$. The recurrence and the initial values give $A_n\geq2A_{n-1}>0$ for $n\geq1$. To check the induction step, use
\[
 P(n)-\frac{a_n}{2}-2(n+1)^4
 =31n^4+122n^3+95n^2+32n+4>0\,.
\]
The same recurrence gives
\[
 A_nC_{n+1}-A_{n+1}C_n
 =\frac{a_n}{(n+1)^4}
 \left(A_{n-1}C_n-A_nC_{n-1}\right)>0\,,
\]
starting from $A_0C_1-A_1C_0=1$, so $C_n/A_n$ and therefore $x_n^{(n)}=21C_n/(4A_n)$ is increasing and bounded by $1$, which gives $x_n^{(n)}-x_{n-1}^{(n-1)}\to0$.

Equation \eqref{eq:diag-A}, together with \cref{prop:diagonal}, gives
\[
 x_n^{(n-1)}-x_n^{(n)}
 =\frac{(4n-1)A_{n-1}}{4nA_n-(4n-1)A_{n-1}}
 \left(x_n^{(n)}-x_{n-1}^{(n-1)}\right)\,.
\]
The fraction is positive and at most $1$, since $A_n\geq2A_{n-1}$. Equation \eqref{eq:diag-B} puts $x_{n+1}^{(n)}$ between $x_n^{(n)}$ and $x_n^{(n-1)}$. Combining these facts with the preceding interval argument yields
\[
 0\leq\frac34\zeta(3)-x_n^{(n)}
 \leq x_n^{(n-1)}-x_n^{(n)}
 \leq x_n^{(n)}-x_{n-1}^{(n-1)}\longrightarrow0\,.
\]
Finally, $x_n^{(n)}=21C_n/(4A_n)$ gives the claimed limit.
\end{proof}

\section{The algebraic limit}\label{sec:algebraic}

We now use the same construction to prove Tasaka's conjecture. The first step is to calculate the quotient at an index where the recurrence simplifies modulo $p$. The second step is to pass to the endpoint $p-1$, where both $A_{p-1}$ and $C_{p-1}$ are divisible by $p$. This is why the final calculation must be made modulo $p^2$.

\subsection{The quotient at a quarter of the range}

Write $a_n=4n^2(16n^2-1)$ for the coefficient of $u_{n-1}$ in \eqref{eq:panzer}. Let $p\geq11$ be prime and put
\begin{equation}\label{eq:rM}
 r=\begin{cases}
 (p-1)/4,&p\equiv1\pmod4,\\
 (p+1)/4,&p\equiv3\pmod4,
 \end{cases}
 \qquad M=\frac{p-1}{2}.
\end{equation}
The choice of $r$ gives $a_r\equiv0\pmod p$. Throughout this section, rational congruences are taken in $\mathbb Z_{(p)}$, i.e. their denominators are prime to $p$.

Write
\begin{equation}\label{eq:P}
 P(n)=65n^4+130n^3+105n^2+40n+6\,.
\end{equation}
The middle term in \eqref{eq:panzer} is then $-P(n)u_n$. We first check the denominators of the quantities that will be reduced modulo $p$. Notice that the terms $C_n$ need not be integers, for example $C_2=173/8$.

\begin{lemma}\label{lem:integrality}
Let $u=A$ or $u=C$. Then $u_n$ is $p$-integral for $0\leq n\leq p-1$. Moreover, both coordinates of $U_m^{(k)}$ are $p$-integral for all $(m,k)$ in the set
\[
 \mathcal T=\{(m,k):0\leq k\leq r,\ 0\leq m\leq 2r-k\}\,.
\]
\end{lemma}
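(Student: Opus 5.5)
The first claim follows by forward induction on $n$, using \eqref{eq:panzer} solved for the top term. The second follows by induction on $k$, using \cref{def:classical-family} and staying inside $\mathcal T$. The only issue in both parts is that no denominator divisible by $p$ is ever introduced.

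For the sequences, rewrite \eqref{eq:panzer} as
\[
 u_{n+1}=\frac{P(n)u_n-a_nu_{n-1}}{(n+1)^4}\qquad(n\geq1),
\]
with $P$ as in \eqref{eq:P} and $a_n=4n^2(16n^2-1)$. The initial values $(1,6)$ and $(0,1)$ are integers. For $1\leq n\leq p-2$ we have $1\leq n+1\leq p-1$, so $(n+1)^4$ is a unit in $\Z_{(p)}$. The numerator has integer-polynomial coefficients, so by induction $u_n\in\Z_{(p)}$ for all $0\leq n\leq p-1$. (This is exactly why $C_2=173/8$ causes no trouble: only the primes up to $n$ enter the denominators.)

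For the family $U_m^{(k)}$, I start at $k=0$. If $(m,0)\in\mathcal T$ then $m\leq 2r\leq (p+1)/2<p$. Hence $m!$ and $(2m)!$ are integers, $8^m$ is prime to $p$, and $h_m\in\Z_{(p)}$. Also $E_m=\sum_{n\leq m}(-1)^{n-1}/n^3$ has only denominators $n^3$ with $n<p$, so $E_m\in\Z_{(p)}$. Thus $U_m^{(0)}=h_m(E_m,1)$ is $p$-integral.

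For the step $k\to k+1$, take $(m,k+1)\in\mathcal T$, so that $k+1\leq r$ and $m\leq 2r-k-1$. Then both $(m,k)$ and $(m+1,k)$ lie in $\mathcal T$, since $m+1\leq 2r-k$. By \eqref{eq:BM-transform} it then suffices to check that $\rho_{m+1}^{(k)}$ and $T_m^{(k)}$ are $p$-integral.

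By \eqref{eq:rho}, $\rho_{m+1}^{(k)}$ is an integer polynomial divided by $4$, so it is $p$-integral. For $T_m^{(k)}=2/(2m-2k-1)$, the integer $2m-2k-1$ is odd, hence nonzero. It is also bounded:
\begin{align}
 2m-2k-1&\leq 4r-4k-3\leq 4r-3\leq p-2,\\
 2m-2k-1&\geq -2k-1\geq -2r+1>-p,
\end{align}
using $4r\leq p+1$. So $0<|2m-2k-1|<p$, and $T_m^{(k)}\in\Z_{(p)}$. Induction on $k$ then gives the claim on all of $\mathcal T$.

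I do not expect a genuine obstacle here. The only delicate point is the bookkeeping that keeps the indices $m+1$ inside $\mathcal T$ and bounds $|2m-2k-1|$ below $p$. This uses the choice of $r$ in \eqref{eq:rM}: $4r\leq p+1$ in both residue classes of $p$ modulo $4$.
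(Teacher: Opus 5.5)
Your proof is correct and follows essentially the same route as the paper's. Both argue by forward induction on the recurrence solved for $u_{n+1}$, then by induction on $k$ inside $\mathcal T$, using $\rho_{m+1}^{(k)}\in\frac14\Z$ and the bound $0<|2m-2k-1|<p$, which comes from $4r\leq p+1$. The only difference is cosmetic: you bound $2m-2k-1$ from above and below separately, while the paper bounds $|2(m-k)-1|$ by $\max(2k+1,4r-4k-3)$.
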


\begin{proof}
Solving \eqref{eq:panzer} for $u_{n+1}$ gives
\[
 u_{n+1}=\frac{P(n)u_n-a_nu_{n-1}}{(n+1)^4}\,.
\]
For $n\leq p-2$ the denominator $(n+1)^4$ is prime to $p$, so the first assertion follows by induction from $u_0,u_1\in\Z$.

For the second assertion, let $k=0$. Since $8,1,\ldots,2r$ are prime to $p$, both $h_m$ and $E_m$ are $p$-integral for $0\leq m\leq2r$, and so is $U_m^{(0)}$.

Now let $0\leq k\leq r-1$ and $0\leq m\leq 2r-k-1$, so that $(m,k+1)\in\mathcal T$. Then $(m+1,k),(m,k)\in\mathcal T$, so the induction hypothesis applies to $U_{m+1}^{(k)}$ and $U_m^{(k)}$. Moreover, $\rho_{m+1}^{(k)}\in\frac14\Z\subset\Z_{(p)}$. It remains only to check the denominator of $T_m^{(k)}=2/(2(m-k)-1)$. Since $-k\leq m-k\leq 2r-2k-1$, we get
\[
 1\leq\bigl|2(m-k)-1\bigr|\leq\max(2k+1,4r-4k-3)\leq 4r-3<p\,,
\]
the last inequality because $4r-3$ equals $p-4$ or $p-2$ according as $p=4r+1$ or $p=4r-1$. The claim follows by induction on $k$.
\end{proof}

\begin{proposition}\label{prop:endpoint}
With $r,M$ as in \eqref{eq:rM}, one has $A_r\not\equiv0\pmod p$ and
\begin{equation}\label{eq:quarter-E}
 \frac{21}{4}\frac{C_r}{A_r}\equiv E_M\pmod p.
\end{equation}
\end{proposition}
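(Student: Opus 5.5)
The idea is to read off the quotient $21C_r/(4A_r)$ from the diagonal vector $D_r=U_r^{(r)}$ using \cref{prop:diagonal}. We then show that, modulo $p$, $D_r$ is a $p$-adic unit multiple of $U_M^{(0)}=h_M(E_M,1)$. Since $h_M=(M!)^2(p-1)!/8^M$ is a $p$-adic unit, the second coordinate of $D_r$ is then nonzero modulo $p$ and the coordinate ratio is $E_M$. By \cref{prop:diagonal}, $D_r=\frac{(r!)^7}{2^r}\bigl(\tfrac{21}{4}C_r,A_r\bigr)$ with $(r!)^7/2^r$ a unit, because $r<p$. So $A_r\not\equiv0\pmod p$ and \eqref{eq:quarter-E} both follow. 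All vectors involved lie in the triangle $\mathcal T$, so \cref{lem:integrality} lets us reduce every identity modulo $p$.

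\textbf{Step 1 (propagation along the antidiagonal $m+k=M$).} In \eqref{eq:rho}, the factor $2(m+1)+2k+1$ of $\rho_{m+1}^{(k)}$ equals $p$ exactly when $m+k=M-1$. For such $(m,k)$, \eqref{eq:BM-transform} reduces to
\[
 U_m^{(k+1)}\equiv T_m^{(k)}U_{m+1}^{(k)}\pmod p .
\]
Here $m+1+k=M$, and $T_m^{(k)}=2/(2(m-k)-1)$ is a $p$-adic unit: its denominator is odd, nonzero, and of absolute value at most $M<p$. The other factors of $\rho_{m+1}^{(k)}$ are $p$-integral, so only the vanishing factor matters. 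Induction on $k$ then gives $U_{M-k}^{(k)}\equiv c_k\,U_M^{(0)}\pmod p$ with $c_k$ a unit, for $0\le k\le r$ whenever $(M-k,k)\in\mathcal T$. In particular, every vector on this antidiagonal is congruent to a unit multiple of $h_M(E_M,1)$.

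\textbf{Step 2 (reaching $(r,r)$).} If $p\equiv1\pmod4$, then $2r=M$, so $(r,r)$ lies on the antidiagonal and Step 1 finishes the proof. If $p\equiv3\pmod4$, then $2r=M+1$, and we need one more step. We use the alternative form of the transformation,
\[
 D_r=T_r^{(r-1)}\bigl(R_r^{(r-1)}U_r^{(r-1)}+\beta_{r+1}^{(r-1)}U_{r-1}^{(r-1)}\bigr),
\]
which appears in the proof of \cref{prop:diagonal} as the first identity there. Here $\beta_{r+1}^{(r-1)}=r^6(4r-1)/16$ with $4r-1=p$, so this term vanishes modulo $p$. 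The remaining coefficient is $T_r^{(r-1)}R_r^{(r-1)}=2\cdot\frac{21r^3}{4}$, which is a unit since $r<p$ and $p\ge11$ (so $p\ne 3,7$). Therefore $D_r\equiv\frac{21r^3}{2}U_r^{(r-1)}$. Since $r+(r-1)=M$, Step 1 applies to $U_r^{(r-1)}$.

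\textbf{Main obstacle.} The main point is bookkeeping: every index pair used, namely $(M-k,k)$, $(M-k-1,k+1)$ and, in the second case, $(r-1,r-1)$, must lie in $\mathcal T$, and the unit claims for $T$ and $R$ must be verified. This reduces to the inequalities $M-k\le 2r-k$ and $m+k\le 2r$ together with the estimates already made in \cref{lem:integrality}. Beyond that, the argument is only the two observed vanishings, $\rho\equiv0$ on $m+k=M-1$ and $\beta_{r+1}^{(r-1)}\equiv0$ when $4r-1=p$.
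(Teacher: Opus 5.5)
Your proposal is correct and is essentially the paper's own proof. It propagates along the antidiagonal $m+k=M$, where the factor $2(m+1)+2k+1$ of $\rho_{m+1}^{(k)}$ equals $p$, and for $p=4r-1$ it adds the same extra step using $T_r^{(r-1)}R_r^{(r-1)}=21r^3/2$ and $\beta_{r+1}^{(r-1)}\equiv0$. One slip: on that antidiagonal the denominator of $T_m^{(k)}$ is $p-4-4k$ (the paper's $4j+1$ or $4j+3$), which reaches $p-4>M$, so your bound ``at most $M$'' is false. The unit claim still holds, since this odd integer satisfies $1\le|p-4-4k|\le p-4<p$ on the range used, exactly as in \cref{lem:integrality}.
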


\begin{proof}
Every vector $U_m^{(k)}$ occurring below is indexed by a pair in the triangle $\mathcal T$ of \cref{lem:integrality} and is therefore $p$-integral. This allows us to discard the terms divisible by $p$.

Suppose first that $p=4r+1$. For $0\leq j\leq r-1$, apply \eqref{eq:BM-transform} with
\[
 m=r+j,\qquad k=r-j-1.
\]
The second factor of $\rho_{m+1}^{(k)}$ is
\[
 2(m+1)+2k+1=4r+1=p\,,
\]
whereas the denominator of $T_m^{(k)}$ is $4j+1$, which is nonzero modulo $p$. Therefore the term $\rho_{m+1}^{(k)}U_m^{(k)}$ in \eqref{eq:BM-transform} vanishes modulo $p$. Since $2r=M$, iteration gives
\[
 U_r^{(r)}\equiv
 \left(\prod_{j=0}^{r-1}\frac2{4j+1}\right)U_M^{(0)}\pmod p.
\]

Now suppose that $p=4r-1$. At $(m,k)=(r,r-1)$, we have
\[
 T_r^{(r-1)}R_r^{(r-1)}=\frac{21r^3}{2},
 \qquad
 T_r^{(r-1)}\beta_{r+1}^{(r-1)}
 =\frac{r^6(4r-1)}8\equiv0\pmod p.
\]
Combining \eqref{eq:m-recurrence} with \eqref{eq:BM-transform} therefore gives
\[
 U_r^{(r)}\equiv\frac{21r^3}{2}U_r^{(r-1)}\pmod p.
\]
The scalar $21r^3/2$ is nonzero modulo $p$ because $p\geq11$. This is the only place in this proof where this hypothesis is used. Notice that the exclusion of $p=7$ is necessary: in that case $r=2$ and $A_2=126\equiv0\pmod 7$, so \eqref{eq:quarter-E} fails. For $0\leq j\leq r-2$, use \eqref{eq:BM-transform} with
\[
 m=r+j,\qquad k=r-j-2.
\]
This time the second factor of $\rho_{m+1}^{(k)}$ is $4r-1=p$, and the denominator of $T_m^{(k)}$ is $4j+3$. Since $2r-1=M$, it follows that
\[
 U_r^{(r)}\equiv\frac{21r^3}{2}
 \left(\prod_{j=0}^{r-2}\frac2{4j+3}\right)U_M^{(0)}\pmod p.
\]

In both cases, $U_r^{(r)}$ is a nonzero scalar multiple of $U_M^{(0)}$ modulo $p$. By \cref{def:classical-family,prop:diagonal}, these vectors are
\[
 \frac{(r!)^7}{2^r}\left(\frac{21}{4}C_r,A_r\right)
 \quad\text{and}\quad h_M(E_M,1)\,,
\]
respectively. Both displayed scalar factors are nonzero modulo $p$. This shows that the second coordinate of $U_M^{(0)}$ is nonzero, so the proportionality above implies the same for $U_r^{(r)}$, and therefore $A_r\not\equiv0\pmod p$. Taking coordinate ratios proves \eqref{eq:quarter-E}.
\end{proof}

\begin{remark}\label{rem:endpoint-vs-q}
The proof of \cref{prop:endpoint} is the $q=1$ version of the cancellation used in the proof of \cref{prop:q-root} below. There, $U_r^{(r)}(q)$ is expanded in terms of vectors $U_j^{(0)}(q)$. We give the two arguments separately because \cref{prop:endpoint} is needed for the congruences modulo $p^2$ at the end of this section, and because the proof of \cref{prop:q-root} uses the formula above when $p=4r-1$.
\end{remark}

It remains to evaluate the half-sum. We use the Bernoulli polynomials defined by
\[
 \frac{te^{xt}}{e^t-1}=\sum_{n\geq0}B_n(x)\frac{t^n}{n!},
 \qquad B_n=B_n(0).
\]
\begin{lemma}\label{lem:half-sum}
For every prime $p\geq7$ and $M=(p-1)/2$,
\begin{equation}\label{eq:E-B}
 E_M\equiv\frac14B_{p-3}\pmod p.
\end{equation}
\end{lemma}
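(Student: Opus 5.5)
We want $E_M=\sum_{n=1}^{M}(-1)^{n-1}n^{-3}\equiv \tfrac14 B_{p-3}\pmod p$ with $M=(p-1)/2$. The plan is to split the alternating sum by parity of $n$ and reduce everything to two standard sums of inverse cubes. With $H_m^{(3)}=\sum_{n=1}^m n^{-3}$, separating even and odd $n$ gives
\[
 E_M=H_M^{(3)}-2\sum_{\substack{n\le M\\ n\ \mathrm{even}}}\frac1{n^3}=H_M^{(3)}-\frac14H_{\lfloor M/2\rfloor}^{(3)}\,.
\]
It therefore suffices to know $H_{(p-1)/2}^{(3)}$ and $H_{\lfloor p/4\rfloor}^{(3)}$ modulo $p$.

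For $H_M^{(3)}$, Fermat gives $n^{-3}\equiv n^{p-4}\pmod p$. By the symmetry $n\mapsto p-n$, which reverses the sign of $n^{p-4}$ since $p-4$ is odd, we get $\sum_{n=1}^{p-1}n^{p-4}\equiv0$. The half sum is handled with the Bernoulli polynomial identity
\[
 \sum_{n=0}^{M}n^{k}=\frac{B_{k+1}(M+1)-B_{k+1}}{k+1},\qquad k=p-4.
\]
Substituting $M+1=\tfrac{p+1}{2}$ and expanding $B_{p-3}(x+\tfrac12)$, then reducing modulo $p$, gives $B_{p-3}(\tfrac12)-B_{p-3}=(2^{4-p}-2)B_{p-3}\equiv(8-2)B_{p-3}$. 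Dividing by $k+1=p-3\equiv-3$ yields the classical congruence $H_M^{(3)}\equiv-2B_{p-3}\pmod p$, which is Glaisher's.

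For the quarter sum, I would use the analogous formula with upper limit $\lfloor p/4\rfloor$, which needs $B_{p-3}(\tfrac14)$. The constraint is that $B_{2j}(\tfrac14)=2^{-2j}(2^{1-2j}-1)B_{2j}$ holds only for even indices. Here $B_{p-3}$ has even index $p-3$, so this identity applies, though with attention to which index is reduced. Concretely, $\sum_{n<p/4}n^{p-4}=\bigl(B_{p-3}(\lfloor p/4\rfloor+1)-B_{p-3}\bigr)/(p-3)$. Writing $\lfloor p/4\rfloor+1\equiv\tfrac14$ or $\tfrac34 \pmod p$ in the $p$-adic sense, I would expand, keeping in mind that the correction term $p\cdot B_{p-4}(\cdot)$ is $p$-integral times $p$ by von Staudt–Clausen on the relevant coefficients. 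This gives $B_{p-3}(\tfrac14)-B_{p-3}$, with the analogous value for $\tfrac34$, which is equal by the reflection symmetry for even index. Using $2^{p-1}\equiv1$, this is $(2^{3-p}\cdot(2^{4-p}-1)\cdot\ldots-1)B_{p-3}$; simplifying should give $H_{\lfloor p/4\rfloor}^{(3)}\equiv-\tfrac{9}{1}\cdot\ldots$. I expect, in line with the classical Lehmer-type result, that $H_{\lfloor p/4\rfloor}^{(3)}\equiv-9B_{p-3}$. Checking consistency: $E_M\equiv-2B_{p-3}+\tfrac94B_{p-3}=\tfrac14B_{p-3}$, which matches the target and supports that value.

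The main obstacle is this quarter-sum evaluation. The work lies in handling the $p$-adic shift from $\lfloor p/4\rfloor+1$ to $\tfrac14$ or $\tfrac34$ carefully modulo $p$, and in tracking the powers of $2$ modulo $p$ correctly. I would verify the final constant numerically for $p=7,11,13$ before writing the argument. As a fallback, the same result may be quoted from Lehmer's 1938 congruences for $\sum_{n<p/k}n^{-s}$.
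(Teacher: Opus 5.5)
Your parity split $E_M=H_M^{(3)}-\tfrac14H_{\lfloor p/4\rfloor}^{(3)}$ is correct (note that $\lfloor M/2\rfloor=\lfloor p/4\rfloor$), and your evaluation $H_M^{(3)}\equiv-2B_{p-3}$ agrees with the paper's. The weak point is the quarter sum. As written, you get $H_{\lfloor p/4\rfloor}^{(3)}\equiv-9B_{p-3}$ by solving backwards from the statement you are trying to prove, so your ``consistency check'' is circular.

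The constant does follow in one line from the identity you already quoted. Every Bernoulli number of index $\le p-3$ is $p$-integral, so $B_{p-3}(x)$ has $p$-integral coefficients and respects congruences in $\mathbb Z_{(p)}$. We have $\lfloor p/4\rfloor+1\equiv\tfrac14$ or $\tfrac34$, and $B_{p-3}(\tfrac34)=B_{p-3}(\tfrac14)$ because the index is even. Hence
\[
 B_{p-3}\bigl(\tfrac14\bigr)=2^{3-p}\bigl(2^{4-p}-1\bigr)B_{p-3}\equiv 4\cdot 7\,B_{p-3}=28\,B_{p-3}\pmod p .
\]
Dividing by $p-3\equiv-3$ gives $H_{\lfloor p/4\rfloor}^{(3)}\equiv(28-1)B_{p-3}/(-3)=-9B_{p-3}$. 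Therefore
\[
 E_M\equiv-2B_{p-3}+\tfrac94B_{p-3}=\tfrac14B_{p-3}.
\]
With this line inserted, your argument is a complete proof.

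The paper takes a shorter route that never needs the quarter sum. It passes to the full-range alternating sum $S=\sum_{j=1}^{p-1}(-1)^{j-1}j^{-3}$. Pairing $j$ with $p-j$ flips both the sign and the odd power, so $S\equiv2E_M$. Separating the even indices of $S$ instead gives $S=H_{p-1}^{(3)}-\tfrac14H_M^{(3)}\equiv-\tfrac14H_M^{(3)}$. Together these give $E_M\equiv-\tfrac18H_M^{(3)}$. So only the half-point value $B_n(\tfrac12)=(2^{1-n}-1)B_n$ is needed, with no case split modulo $4$.

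Your route needs the extra input $B_{2j}(\tfrac14)=2^{-2j}B_{2j}(\tfrac12)$, which comes from the multiplication theorem plus reflection, and it needs both residue classes $p\equiv1,3\pmod 4$. In exchange, it yields the Lehmer-type congruence $H_{\lfloor p/4\rfloor}^{(3)}\equiv-9B_{p-3}$ as a by-product. Both arguments extend to any odd weight, but the paper's version needs only the half-point value in every weight.
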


\begin{proof}
Write $H_m^{(3)}=\sum_{j=1}^mj^{-3}$ and $S=\sum_{j=1}^{p-1}(-1)^{j-1}j^{-3}$.

Pairing the terms indexed by $j$ and $p-j$ gives $S\equiv2E_M$, since $(-1)^{p-j-1}(p-j)^{-3}\equiv(-1)^{j-1}j^{-3}\pmod p$ shows that the two halves of $S$ agree modulo $p$.

Separating the even terms gives
\[
 S=H_{p-1}^{(3)}-2\sum_{i=1}^{M}\frac1{(2i)^3}
  =H_{p-1}^{(3)}-\frac14H_M^{(3)}\,,
\]
and $H_{p-1}^{(3)}\equiv0\pmod p$ because $j^{-3}\equiv j^{p-4}$ and $p-1\nmid p-4$ for $p\geq7$. Combining the two congruences gives $E_M\equiv-H_M^{(3)}/8$.

The Seki--Bernoulli formula $\sum_{j=0}^{n-1}j^k=\bigl(B_{k+1}(n)-B_{k+1}\bigr)/(k+1)$ with $k=p-4$ and $n=M+1$ gives
\[
 H_M^{(3)}
 \equiv\frac{B_{p-3}(M+1)-B_{p-3}}{p-3}\pmod p.
\]
For $1\leq i\leq p-3$, we have $p-1\nmid i$, while $B_0=1$. By von Staudt--Clausen, $B_i$ is therefore $p$-integral for $0\leq i\leq p-3$, and the polynomial $B_{p-3}(x)$ has $p$-integral coefficients and may be evaluated modulo $p$. Now $M+1\equiv1/2\pmod p$ and
\[
B_{p-3}(1/2)=(2^{4-p}-1)B_{p-3}\equiv7B_{p-3}\pmod p.
\]
Here the congruence follows from Fermat's theorem. So the numerator in the Seki--Bernoulli formula is $6B_{p-3}$ and its denominator is $p-3\equiv-3$, so $H_M^{(3)}\equiv-2B_{p-3}\pmod p$. This proves \eqref{eq:E-B}.
\end{proof}

By \cref{prop:endpoint,lem:half-sum},
\begin{equation}\label{eq:lambda}
 \lambda:=\frac{C_r}{A_r}\equiv\frac1{21}B_{p-3}\pmod p.
\end{equation}

\subsection{The endpoint normalization}
It remains to pass from the index $r$ to $p-1$. We first record two facts needed at the index $p-1$.

\begin{lemma}\label{lem:A-endpoint}
The solution $A$ has the integral binomial representation
\begin{equation}\label{eq:A-double-sum}
 A_n=\sum_{i,j=0}^n
 \binom ni^2\binom nj^2\binom{i+j}{n}^2\,.
\end{equation}
In particular $A_n\in\mathbb Z$ for every $n\geq0$, and, for every prime $p\geq5$,
\begin{equation}\label{eq:A-minus3}
 A_{p-1}\equiv-3p\pmod {p^2}.
\end{equation}
\end{lemma}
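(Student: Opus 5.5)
The plan has two independent parts: first show that the double sum in \eqref{eq:A-double-sum} satisfies \eqref{eq:panzer} with the right initial values, and then read off the congruence \eqref{eq:A-minus3} from that explicit formula.

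\textbf{The recurrence.} Write $F(n;i,j)=\binom ni^2\binom nj^2\binom{i+j}n^2$ and let $S_n$ be its double sum. I would certify \eqref{eq:panzer} for $S_n$ by creative telescoping in two variables. That is, I would exhibit rational functions $R_1(n,i,j)$ and $R_2(n,i,j)$ such that
\[
 4n^2(16n^2-1)F(n-1;i,j)-P(n)F(n;i,j)+(n+1)^4F(n+1;i,j)
 =G_1(n;i+1,j)-G_1(n;i,j)+G_2(n;i,j+1)-G_2(n;i,j),
\]
where $G_\ell=R_\ell F$. Such certificates are produced by a multivariate Zeilberger (Koutschan or Chyzak) algorithm. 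Once the certificates are written down, checking the identity is routine: divide by $F(n;i,j)$ and compare rational functions.

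Summing over all $i,j\in\Z$ makes the right-hand side telescope to zero. For this I must check that $G_1$ and $G_2$ have finite support and that $R_1,R_2$ introduce no poles on that support.

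This certificate step is the main obstacle, since the certificates are large. If they prove unwieldy, I would instead cite the identification of this sequence as entry no.~28 of \cite{AESZ}, where the binomial double sum is listed together with its operator.

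The initial values are quick to check. For $n=0$ only $(i,j)=(0,0)$ contributes, so $S_0=1$. For $n=1$ the pairs $(0,1)$, $(1,0)$, $(1,1)$ contribute $1+1+4=6$. Hence $S=A$ by uniqueness, and integrality of $A_n$ is then immediate.

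\textbf{The congruence.} Take $n=p-1$ and reduce each binomial factor modulo $p^2$.

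For $0\le i\le p-1$ we have $\binom{p-1}i\equiv(-1)^i(1-pH_i)\pmod{p^2}$, where $H_i=\sum_{k\le i}1/k$. Squaring gives $\binom{p-1}i^2\equiv1-2pH_i$.

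Next consider $\binom{i+j}{p-1}$. It vanishes unless $i+j\ge p-1$. Write $i+j=p-1+t$ with $0\le t\le p-1$. For $t\ge1$,
\[
 \binom{p-1+t}{t}=\frac{p(p+1)\cdots(p+t-1)}{t!}
\]
is divisible by $p$, so its square vanishes modulo $p^2$. Thus only the antidiagonal $i+j=p-1$ contributes, and there the factor equals $1$.

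On the antidiagonal, $H_{p-1-i}\equiv H_i\pmod p$, because $H_{p-1}\equiv0$ and $1/(p-k)\equiv-1/k$. Since each correction is already multiplied by $p$, this gives
\[
 A_{p-1}\equiv\sum_{i=0}^{p-1}(1-2pH_i)(1-2pH_{p-1-i})\equiv p-4p\sum_{i=0}^{p-1}H_i\pmod{p^2}.
\]
Finally, $\sum_{i=0}^{p-1}H_i=\sum_{k=1}^{p-1}(p-k)/k=pH_{p-1}-(p-1)\equiv1\pmod p$. Therefore $A_{p-1}\equiv p-4p=-3p\pmod{p^2}$.

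All denominators occurring here are prime to $p$. The assumption $p\ge5$ is used only to ensure $H_{p-1}\equiv0\pmod p$, which is needed both for the symmetry $H_{p-1-i}\equiv H_i$ and in the last step.
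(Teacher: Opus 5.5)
Your proposal is correct and is essentially the paper's proof. Like the paper, you take \eqref{eq:A-double-sum} from \cite{AESZ}, since your creative-telescoping certificate is never actually produced, and you get the congruence the same way: discard the terms with $i+j\neq p-1$ modulo $p^2$ and reduce to $p-4p\sum_i H_i$. The differences are cosmetic: the paper uses Lucas' theorem where you use the product formula, and it uses the exact symmetry $\binom{p-1}{p-1-i}=\binom{p-1}{i}$, giving $\sum_i\binom{p-1}{i}^4$, where you use $H_{p-1-i}\equiv H_i$. One small correction to your closing remark: $H_{p-1}\equiv0\pmod p$ holds for every odd prime, and your last step does not use it, only the $p$-integrality of $H_{p-1}$.
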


\begin{proof}
The binomial representation \eqref{eq:A-double-sum} is given in \cite{AESZ}*{entry no.~28}, and it shows that every $A_n$ is an integer. At $n=p-1$, the terms with $i+j<p-1$ vanish. If $i+j=p-1+t$ with $1\leq t\leq p-1$, then Lucas' theorem gives
\[
 \binom{p-1+t}{p-1}
 =\binom{p-1+t}{t}
 \equiv\binom10\binom{t-1}t=0\pmod p.
\]
So the terms with $i+j>p-1$ vanish modulo $p^2$, only $i+j=p-1$ remains, and we get
\[
 A_{p-1}\equiv\sum_{i=0}^{p-1}\binom{p-1}{i}^4\pmod {p^2}.
\]
Writing $H_i=\sum_{j=1}^i1/j$, we have
\[
 \binom{p-1}{i}\equiv(-1)^i(1-pH_i)\pmod {p^2}.
\]
This gives
\[
 A_{p-1}\equiv p-4p\sum_{i=0}^{p-1}H_i\pmod {p^2}.
\]
Finally
\[
 \sum_{i=0}^{p-1}H_i
 =\sum_{j=1}^{p-1}\frac{p-j}{j}\equiv1\pmod p,
\]
which proves \eqref{eq:A-minus3}.
\end{proof}

Panzer and Yeats also obtain \eqref{eq:A-minus3} from their factorial formula for the Martin sequence \cite{PY}*{Lemma~6.33}. The argument above instead derives it directly from \eqref{eq:A-double-sum}.

\begin{lemma}\label{lem:W}
For the two solutions $A$ and $C$, let
\[
 W_n=A_nC_{n+1}-A_{n+1}C_n\,.
\]
Then
\begin{equation}\label{eq:W-closed}
 W_n=\frac{(4n+1)\binom{4n}{2n}\binom{2n}{n}}{(n+1)^4}\,.
\end{equation}
For every prime $p\geq5$,
\begin{equation}\label{eq:W-unit}
 p^2W_{p-1}\equiv-3\pmod p.
\end{equation}
\end{lemma}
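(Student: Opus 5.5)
The plan has two parts: derive the Casoratian recurrence and check the closed form by induction, then reduce the closed form at $n=p-1$.

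\textbf{Recurrence for $W_n$.} Multiply \eqref{eq:panzer} for $A$ by $C_n$ and for $C$ by $A_n$, then subtract. The middle terms cancel, and we get
\[
 (n+1)^4W_n=a_nW_{n-1},\qquad a_n=4n^2(16n^2-1).
\]
This is the relation already used in the proof of \cref{cor:classical-limit}. The initial values \eqref{eq:initials} give $W_0=A_0C_1-A_1C_0=1$. This agrees with \eqref{eq:W-closed} at $n=0$, where the right-hand side is $1\cdot1\cdot1/1=1$.

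\textbf{Closed form by induction.} Let $V_n$ denote the right-hand side of \eqref{eq:W-closed}. It suffices to show $V_n/V_{n-1}=a_n/(n+1)^4$. The standard ratios are
\[
 \frac{\binom{4n}{2n}}{\binom{4n-4}{2n-2}}=\frac{(4n)(4n-1)(4n-2)(4n-3)}{(2n)^2(2n-1)^2},
 \qquad
 \frac{\binom{2n}{n}}{\binom{2n-2}{n-1}}=\frac{2(2n-1)}{n}.
\]
Their product simplifies to $4(4n-1)(4n-3)/n$. Multiplying by $(4n+1)/(4n-3)$ and by $n^4/(n+1)^4$ gives
\[
 \frac{V_n}{V_{n-1}}=\frac{4n^3(16n^2-1)}{n\,(n+1)^4}=\frac{a_n}{(n+1)^4}.
\]
So $W_n=V_n$ for all $n\geq0$.

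\textbf{The congruence at $n=p-1$.} Setting $n=p-1$ gives
\[
 p^2W_{p-1}=\frac{(4p-3)\binom{4p-4}{2p-2}\binom{2p-2}{p-1}}{p^2}.
\]
So the task is to compute the $p$-adic unit parts of the two binomials. First, $\binom{2p-2}{p-1}=(2p-2)!/((p-1)!)^2$. Exactly one factor $p$ occurs in the numerator and none in the denominator. Removing it and reducing modulo $p$,
\[
 \frac{\binom{2p-2}{p-1}}{p}\equiv\frac{(p-1)!\,(p-2)!\cdot(p+1)\cdots(2p-2)/(p-2)!}{((p-1)!)^2}.
\]
More simply, write $(2p-2)!=p\cdot(p-1)!\cdot\prod_{j=1}^{p-2}(p+j)$. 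This gives $\binom{2p-2}{p-1}/p\equiv (p-2)!/(p-1)!\equiv -1\cdot(-1)^{-1}\cdots$. By Wilson, $(p-2)!\equiv1$ and $(p-1)!\equiv-1$, so this equals $-1$.

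Similarly, $(4p-4)!$ contains the multiples $p,2p,3p$, each to the first power since $p\geq5$, while $((2p-2)!)^2$ contains $p^2$. So $\binom{4p-4}{2p-2}/p$ is a unit. By the multiplicative Lucas-type computation (factor $n!=p^{\lfloor n/p\rfloor}\lfloor n/p\rfloor!\,(-1)^{\lfloor n/p\rfloor}\cdot$(residue factorial) modulo $p$), we get
\[
 \frac{\binom{4p-4}{2p-2}}{p}\equiv\frac{3!\,(-1)^3\,(p-4)!}{\bigl(1!\,(-1)(p-2)!\bigr)^2}=-6(p-4)!\pmod p.
\]
Here $(p-4)!\equiv -1/((p-1)(p-2)(p-3))\equiv 1/6$, so this factor is $\equiv-1$.

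Finally $4p-3\equiv-3$. Multiplying the three factors gives $(-3)(-1)(-1)=-3$, which is \eqref{eq:W-unit}.

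I expect the main obstacle to be the careful bookkeeping of $p$-adic valuations and signs in these factorial reductions. I would double-check them numerically at $p=5$, where $p^2W_4=17\binom{12}{6}\binom{6}{3}/25=17\cdot924\cdot20/25$, and $17\cdot924\cdot20/25\equiv-3\pmod5$ is what must hold.
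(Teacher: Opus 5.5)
Your proof is correct and follows essentially the paper's route. It derives the Casoratian recurrence $(n+1)^4W_n=a_nW_{n-1}$ with $W_0=1$, checks the closed form by its ratio, and obtains the unit parts $\frac1p\binom{2p-2}{p-1}\equiv\frac1p\binom{4p-4}{2p-2}\equiv-1\pmod p$ by Wilson-type factorial reductions, multiplied by $4p-3\equiv-3$.

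Two slips need fixing. First, the product of the two binomial ratios is $4(4n-1)(4n-3)/n^2$, not $/n$, although your final ratio $a_n/(n+1)^4$ is right. Second, your $p=5$ check should use $\binom{16}{8}\binom{8}{4}$, which gives $p^2W_4=17\cdot12870\cdot70/25=612612\equiv-3\pmod 5$; the expression $17\binom{12}{6}\binom{6}{3}/25$ you wrote is not even $5$-integral.
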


\begin{proof}
Write \eqref{eq:panzer} as $(n+1)^4u_{n+1}=P(n)u_n-a_nu_{n-1}$ and apply it to both $A$ and $C$:
\[
 \begin{aligned}
 (n+1)^4W_n
 &=A_n\bigl(P(n)C_n-a_nC_{n-1}\bigr)-\bigl(P(n)A_n-a_nA_{n-1}\bigr)C_n\\
 &=a_n\bigl(A_{n-1}C_n-A_nC_{n-1}\bigr)=a_nW_{n-1}.
 \end{aligned}
\]
So $W_n$ satisfies the first-order recurrence
\[
 W_n=\frac{a_n}{(n+1)^4}W_{n-1},\qquad
 W_0=A_0C_1-A_1C_0=1.
\]
The expression on the right of \eqref{eq:W-closed} equals $1$ at $n=0$, and the ratio of its values at $n$ and $n-1$ is
\[
 \frac{4n+1}{4n-3}\cdot\frac{4n(4n-1)(4n-2)(4n-3)}{\bigl((2n)(2n-1)\bigr)^2}
 \cdot\frac{(2n)(2n-1)}{n^2}\cdot\frac{n^4}{(n+1)^4}
 =\frac{4n^2(16n^2-1)}{(n+1)^4}
 =\frac{a_n}{(n+1)^4}\,.
\]
Therefore, it satisfies the same recurrence and initial condition as $W_n$.

We use the following two congruences, obtained after dividing out one factor of $p$:
\begin{equation}\label{eq:binomial-units}
 \frac1p\binom{2p-2}{p-1}\equiv-1\pmod p,
 \qquad
 \frac1p\binom{4p-4}{2p-2}\equiv-1\pmod p.
\end{equation}
For the first, use
\[
 \binom{2p-2}{p-1}=\frac{p}{2p-1}\binom{2p-1}{p-1},
 \qquad \binom{2p-1}{p-1}\equiv1\pmod p.
\]
For the second, use the product form of the binomial coefficient. Its numerator runs from $2p-1$ to $4p-4$, and its denominator from $1$ to $2p-2$. Accounting for the prefactor $1/p$, the multiples of $p$ contribute $(2p)(3p)/(p\cdot p)=6$. Reducing all remaining factors modulo $p$ gives
\[
 \frac1p\binom{4p-4}{2p-2}
 \equiv
 6\frac{(p-1)(p-1)!(p-4)!}{(p-1)!(p-2)!}\equiv-1\pmod p.
\]
Substituting \eqref{eq:binomial-units} into \eqref{eq:W-closed} at $n=p-1$ proves \eqref{eq:W-unit}.
\end{proof}

\subsection{Reversing the recurrence}
Let $s=p-r$. Since $a_n=4n^2(4n-1)(4n+1)$, the only indices $1\leq n\leq p-1$ for which $a_n\equiv0\pmod p$ are $r$ and $s$. Moreover, $r<s\leq p-2$ because $2r<p$ and $r\geq3$.

\begin{lemma}\label{lem:collapse}
Modulo $p$, one has
\[
 A_n\equiv C_n\equiv0\qquad(s\leq n\leq p-1),
\]
and
\begin{equation}\label{eq:freeze}
 C_n\equiv\lambda A_n\qquad(r\leq n\leq p-1),
\end{equation}
where $\lambda$ is defined in \eqref{eq:lambda}.
\end{lemma}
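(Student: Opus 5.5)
We work throughout with the recurrence in the solved form $(n+1)^4u_{n+1}=P(n)u_n-a_nu_{n-1}$, reduced modulo $p$. By \cref{lem:integrality} all $A_n,C_n$ with $0\le n\le p-1$ are $p$-integral, and the leading coefficient $(n+1)^4$ is a unit for $n\le p-2$. The whole argument uses the vanishing of $a_r$ and $a_s$ modulo $p$.

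\emph{The freeze \eqref{eq:freeze}.} Put $v_n=C_n-\lambda A_n$, with $\lambda\in\Z_{(p)}$ from \eqref{eq:lambda}; this is legitimate since $A_r\not\equiv0$ by \cref{prop:endpoint}. Then $v$ is a $p$-integral solution of \eqref{eq:panzer} on $0\le n\le p-1$, and $v_r=0$ exactly, by the definition of $\lambda$. Apply the recurrence at $n=r$: since $a_r\equiv0\pmod p$ and $v_r=0$, we get $(r+1)^4v_{r+1}\equiv P(r)v_r-a_rv_{r-1}\equiv0$. As $r+1<p$, this gives $v_{r+1}\equiv0$. The two consecutive values $v_r,v_{r+1}$ vanish modulo $p$, and the forward recurrence has unit leading coefficient up to $n=p-2$. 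By induction, $v_n\equiv0$ for all $r\le n\le p-1$, which is \eqref{eq:freeze}.

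\emph{Vanishing from $s$ on.} By \eqref{eq:freeze}, it suffices to show $A_n\equiv0$ for $s\le n\le p-1$, since then $C_n\equiv\lambda A_n\equiv0$ as well. Apply the same propagation argument to $A$, starting from index $s$. Because $a_s\equiv0$, the recurrence at $n=s$ gives $(s+1)^4A_{s+1}\equiv P(s)A_s$. So it is enough to show $A_s\equiv0$ and $P(s)\equiv0\pmod p$, or otherwise to find two consecutive vanishing values.

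\begin{itemize}
\item \emph{Evaluating $P(s)$.} Since $s\equiv-r$, and $4r\equiv\pm1$ with $4s\equiv\mp1$, we can compute $P$ at $n\equiv\mp1/4$. Here $256P(n)=65(4n)^4+520(4n)^3+1680(4n)^2+2560 (4n)+1536$. For $4n=-1$ this gives $65-520+1680-2560+1536=201$. For $4n=+1$ it gives $6361$. Neither is obviously zero, so the naive route fails.
\item \emph{Using the integral representation instead.} I would therefore use \eqref{eq:A-double-sum} directly. For $s\le n\le p-1$ with $n>p/2$, Lucas' theorem is not immediately decisive. A cleaner route is the symmetry $n\mapsto -1-n$: the recurrence \eqref{eq:panzer} is essentially invariant modulo $p$ under $n\mapsto p-1-n$. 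Under this map, the block $[s,p-1]$ corresponds to $[0,r-1]$, and the singular index $s$ is mirrored to $r-1$. A solution of the mirrored recurrence that starts from $a_s\equiv0$ must vanish.
\item \emph{A Wronskian argument.} More concretely, I would use the Wronskian of \cref{lem:W}. $W_n$ has the closed form \eqref{eq:W-closed}. For $s\le n<p-1$ it is divisible by $p$, because $\binom{4n}{2n}$ contains the factor $3p$ or $2p$ when $4n\ge 3p$, or similar. Combining $W_n\equiv0$ with freezing only gives proportionality, though, not vanishing.
\end{itemize}

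\emph{Main obstacle.} The crux is proving $A_s\equiv A_{s+1}\equiv0\pmod p$ (two consecutive zeros). I would first try to show $A_s\equiv0$ from \eqref{eq:A-double-sum} via Lucas' theorem. Write $n=s$. The factor $\binom{n}{i}^2\binom nj^2\binom{i+j}{n}^2$ is nonzero modulo $p$ only if $i+j<p$ or if the digit conditions hold. Splitting $i+j=n+t$, the terms pair off under $i\mapsto n-i$ combined with the Dixon-type identity; I expect the mod-$p$ value to reduce to a truncated binomial sum that vanishes because $2s>p$. If that fails, the fallback is the backward direction: verify $A_{p-1}\equiv0$ from \eqref{eq:A-minus3}, and derive $A_{p-2}\equiv0$ from the recurrence at $n=p-2$, namely $(p-1)^4A_{p-1}=P(p-2)A_{p-2}-a_{p-2}A_{p-3}$, read at $n\equiv-1,-2$. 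Then run the recurrence backwards; this is valid while $a_n$ is a unit, i.e. down to $n=s+1$. That gives $A_n\equiv0$ for $s\le n\le p-1$. Backward propagation from $p-1,p-2$ requires a second vanishing value, and I would obtain it from the $p$-adic valuation of $W_{p-2}$ via \eqref{eq:W-closed}.
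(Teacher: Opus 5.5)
\textbf{There is a genuine gap in the second half.} Your proof of the freeze \eqref{eq:freeze} is correct and is exactly the paper's argument. So is your plan for the vanishing statement: reduce to $A$, start from $A_{p-1}\equiv0$, and run the recurrence backwards while $a_n$ is a unit, i.e.\ for $s<n\le p-2$. But you never produce the second consecutive zero needed to start the backward propagation.

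Your candidate routes do not supply it:
\begin{itemize}
\item The recurrence at $n=p-2$ that you write down contains the unknown $A_{p-3}$, so it cannot isolate $A_{p-2}$.
\item The $P(s)$ computation, the symmetry remark, and the Lucas computation at $n=s$ are left as expectations rather than arguments.
\item The Wronskian cannot work. Modulo $p$, $W_{p-2}\equiv0$ is automatic once the freeze holds. Using $v_p(W_{p-2})=2$ together with $A_{p-1}/p\equiv-3$ and $C_{p-2}\equiv\lambda A_{p-2}$, you only get $A_{p-2}\bigl(C_{p-1}/p+3\lambda\bigr)\equiv0\pmod p$. Resolving this requires knowing $C_{p-1}/p$, which is exactly what the paper computes later \emph{using} this lemma. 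A posteriori $C_{p-1}/p+3\lambda\equiv\lambda$, which even vanishes when $p\mid B_{p-3}$.
\end{itemize}

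\textbf{The missing idea.} Use the recurrence at $n=p-1$, where the leading coefficient $(n+1)^4=p^4$ degenerates:
\[
a_{p-1}A_{p-2}=P(p-1)A_{p-1}-p^4A_p.
\]
By \eqref{eq:A-double-sum}, $A_p$ is an integer. This is not covered by \cref{lem:integrality}, and it fails for $C$, where $p^3C_p\equiv1$. Hence $p^4A_p\equiv0$. Together with $A_{p-1}\equiv0$ and $a_{p-1}\equiv a_1=60\not\equiv0$, this gives $A_{p-2}\equiv0$. The backward recurrence from $n=p-2$ down to $n=s+1$ then gives $A_n\equiv0$ for $s\le n\le p-1$, and the freeze transfers this to $C$.

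Alternatively, your Lucas idea works cleanly at $n=p-2$ instead of $n=s$. Only the terms with $i+j\in\{p-2,p-1\}$ survive modulo $p$. Using $\binom{p-2}{i}\equiv(-1)^i(i+1)$, the two sums become $\sum_k k^4$ and $\sum_i i^2(i+1)^2$ over a full residue system, and both vanish for $p\ge7$.
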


\begin{proof}
By \cref{prop:endpoint}, $A_r\not\equiv0\pmod p$, so $\lambda$ is defined. The sequence $C-\lambda A$ satisfies \eqref{eq:panzer} and vanishes at $r$ modulo $p$. Let $u$ be any solution with $u_r\equiv0$. At $n=r$ the coefficient $a_r$ vanishes modulo $p$, so \eqref{eq:panzer} reduces to $(r+1)^4u_{r+1}\equiv0$, and $r+1\not\equiv0$ gives $u_{r+1}\equiv0$. For $r<n\leq p-2$ the coefficient $(n+1)^4$ is again nonzero modulo $p$, so $u_{n-1}\equiv u_n\equiv0$ forces $u_{n+1}\equiv0$. Induction gives $u_n\equiv0\pmod p$ for $r\leq n\leq p-1$. Applying this to $C-\lambda A$ proves \eqref{eq:freeze}.

On the other hand, \cref{lem:A-endpoint} gives $A_{p-1}\equiv0\pmod p$. For $s<n\leq p-1$ the coefficient $a_n$ is nonzero modulo $p$, so \eqref{eq:panzer} determines $u_{n-1}$ from $u_n$ and $u_{n+1}$. At $n=p-1$ the last term of \eqref{eq:panzer} is $p^4A_p$, which vanishes modulo $p$ because $A_p\in\mathbb Z$. Since $a_{p-1}\equiv a_1=60\not\equiv0\pmod p$, the recurrence gives $A_{p-2}\equiv0$. Running the recurrence backwards from $n=p-2$ down to $n=s+1$ now gives $A_n\equiv0$ for $s\leq n\leq p-1$. As $r\leq s$, equation \eqref{eq:freeze} gives the same for $C$.
\end{proof}

For $u=A$ or $u=C$, the quantities $u_{p-1}/p$ and $p^3u_p$ are $p$-integral. To see this, notice that $u_0,\ldots,u_{p-1}$ are $p$-integral by \cref{lem:integrality}, while \cref{lem:collapse} gives $p\mid u_{p-2}$ and $p\mid u_{p-1}$. The recurrence at $n=p-1$ reads
\[
 p^4u_p=P(p-1)u_{p-1}-a_{p-1}u_{p-2}\,.
\]
Its right-hand side is divisible by $p$, so $p^3u_p$ is $p$-integral. The term $u_p$ itself need not be $p$-integral.

To write the reversed recurrence in the form \eqref{eq:panzer}, define
\begin{equation}\label{eq:G-hat}
 G_0=1,\qquad
 G_k=\prod_{j=1}^k\frac{a_{p-j}}{j^4},\qquad
 \widehat u_k=G_k\frac{u_{p-1-k}}p
 \quad(0\leq k\leq r).
\end{equation}
\begin{lemma}\label{lem:reflection}
Let $u=A$ or $u=C$. Then, for $0\leq k\leq r$,
\begin{equation}\label{eq:hat-solution}
 \widehat u_k
 \equiv\left(\frac{u_{p-1}}p\right)A_k-p^3u_pC_k\pmod p.
\end{equation}
\end{lemma}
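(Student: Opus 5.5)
The plan is to show that the sequence $(\widehat u_k)_{0\le k\le r}$ satisfies the recurrence \eqref{eq:panzer} modulo $p$, with the same coefficients. The right-hand side of \eqref{eq:hat-solution} is a linear combination of the two solutions $A$ and $C$, so it also satisfies \eqref{eq:panzer}. I would then match the initial values at $k=0,1$ and conclude by uniqueness modulo $p$. Uniqueness holds because the leading coefficient $(k+1)^4$ is invertible modulo $p$ for $k+1\le r<p$.

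\emph{Integrality.} Put $v_k=u_{p-1-k}/p$.
\begin{itemize}
\item For $0\le k\le r-1$ we have $p-1-k\ge s$, so \cref{lem:collapse} gives $p\mid u_{p-1-k}$, and $v_k$ is $p$-integral.
\item $G_k$ is $p$-integral for $k\le r$, since its denominators $j^4$ with $j\le r$ are prime to $p$.
\item At $k=r$, $G_r$ contains the factor $a_{p-r}=a_s\equiv0\pmod p$. Since $u_{p-1-r}$ is $p$-integral by \cref{lem:integrality}, $\widehat u_r$ is $p$-integral as well.
\end{itemize}
Hence all $\widehat u_k$ with $0\le k\le r$ are $p$-integral, which is what allows reduction modulo $p$.

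\emph{Reversed recurrence.} Take \eqref{eq:panzer} at $n=p-1-k$ with $1\le k\le r-1$, divide by $p$, and get the exact identity
\[
 a_{p-1-k}v_{k+1}-P(p-1-k)v_k+(p-k)^4v_{k-1}=0 .
\]
Using $G_{k+1}=G_k\,a_{p-1-k}/(k+1)^4$ and $G_k=G_{k-1}\,a_{p-k}/k^4$, multiply through by $G_k$. This gives, exactly,
\[
 (k+1)^4\widehat u_{k+1}-P(p-1-k)\widehat u_k+\frac{(p-k)^4a_{p-k}}{k^4}\widehat u_{k-1}=0 .
\]
No division by a possibly vanishing $a_{p-j}$ occurs. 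Now reduce modulo $p$:
\begin{itemize}
\item $(p-k)^4/k^4\equiv1$;
\item $a_{p-k}\equiv a_{-k}=a_k$, since $a_n$ is even in $n$;
\item $P(p-1-k)\equiv P(-k-1)=P(k)$, by the symmetry $P(-n-1)=P(n)$. This is checked by expanding $65(n+1)^4-130(n+1)^3+105(n+1)^2-40(n+1)+6$.
\end{itemize}
So $\widehat u$ satisfies \eqref{eq:panzer} modulo $p$ for $1\le k\le r-1$, which determines $\widehat u_2,\dots,\widehat u_r$ from $\widehat u_0,\widehat u_1$.

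\emph{Initial values.}
\begin{itemize}
\item At $k=0$: $\widehat u_0=u_{p-1}/p$, and the right-hand side of \eqref{eq:hat-solution} is $(u_{p-1}/p)\cdot1-p^3u_p\cdot0$, so the two agree.
\item At $k=1$: from the recurrence at $n=p-1$, namely $p^4u_p=P(p-1)u_{p-1}-a_{p-1}u_{p-2}$, we get
\[
 \widehat u_1=\frac{a_{p-1}u_{p-2}}{p}=P(p-1)\frac{u_{p-1}}p-p^3u_p\equiv6\,\frac{u_{p-1}}p-p^3u_p ,
\]
using $P(p-1)\equiv P(0)=6$. This equals $(u_{p-1}/p)A_1-p^3u_pC_1$, because $A_1=6$ and $C_1=1$.
\end{itemize}
The quantities $u_{p-1}/p$ and $p^3u_p$ are $p$-integral, as shown before the lemma, so all these congruences are legitimate.

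I expect the main obstacle to be the bookkeeping of $p$-integrality at the boundary $k=r$, where $G_r$ absorbs the zero of $a_s$. The key point is to multiply by $G_k$ rather than divide, so that the identity stays valid up to $k=r-1$ and thus reaches $\widehat u_r$. The symmetry $P(-n-1)=P(n)$ is the other essential, though routine, input.
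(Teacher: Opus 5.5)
Your proposal is correct and follows essentially the same route as the paper. You show that $\widehat u_k$ satisfies \eqref{eq:panzer} modulo $p$ for $1\le k<r$ using $G_{k+1}/G_k=a_{p-1-k}/(k+1)^4$, $a_{p-k}\equiv a_k$ and $P(-1-k)=P(k)$, match the values at $k=0,1$ via the recurrence at $n=p-1$, and conclude by uniqueness. The paper additionally records that $G_r/p\not\equiv0\pmod p$; this is not needed for the lemma itself but is used in the argument that follows it.
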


\begin{proof}
For $k<r$, $G_k$ is nonzero modulo $p$, and $u_{p-1-k}$ is divisible by $p$ by \cref{lem:collapse}. The product $G_r$ contains the factor $a_s$ exactly once, and since one of $4s-1$ and $4s+1$ is $3p$, the number $G_r$ is divisible by $p$ but not by $p^2$. Therefore, $\widehat u_k$ is $p$-integral for $0\leq k\leq r$, and $G_r/p\not\equiv0\pmod p$.

Apply \eqref{eq:panzer} with $n=p-1-k$ and multiply by $G_k/p$. Since
\[
 \frac{G_{k+1}}{G_k}=\frac{a_{p-k-1}}{(k+1)^4},
 \qquad a_{p-k}\equiv a_k\pmod p,
 \qquad P(-1-k)=P(k),
\]
we obtain
\[
 a_k\widehat u_{k-1}-P(k)\widehat u_k
 +(k+1)^4\widehat u_{k+1}\equiv0\pmod p
\]
for $1\leq k<r$, which is \eqref{eq:panzer} at $n=k$. The original recurrence at $n=p-1$, divided by $p$, gives
\[
 \widehat u_1-6\widehat u_0+p^3u_p\equiv0\pmod p.
\]
Since $\widehat u_0=u_{p-1}/p$, the sequence
\[
 \left(\frac{u_{p-1}}p\right)A_k-p^3u_pC_k
\]
has the same first two values as $(\widehat u_k)$ and satisfies the same recurrence. This proves \eqref{eq:hat-solution}.
\end{proof}

By \cref{lem:A-endpoint},
\[
 \frac{A_{p-1}}p\equiv-3,\qquad p^3A_p\equiv0\pmod p.
\]
The second congruence uses $A_p\in\mathbb Z$. At $n=p-1$, the definition of $W_{p-1}$ gives
\[
 W_{p-1}=A_{p-1}C_p-A_pC_{p-1}\,.
\]
Multiplying by $p^2$ and reducing modulo $p$ gives
\[
 -3\equiv p^2W_{p-1}
 \equiv\frac{A_{p-1}}p\,p^3C_p
       -p^3A_p\frac{C_{p-1}}p
 \equiv-3p^3C_p\pmod p,
\]
where we used \cref{lem:W}. This gives $p^3C_p\equiv1\pmod p$, and \cref{lem:reflection} gives
\[
 \widehat A_k\equiv-3A_k,\qquad
 \widehat C_k\equiv
 \left(\frac{C_{p-1}}p\right)A_k-C_k\pmod p.
\]

Since $r\leq s-1\leq p-1$, \eqref{eq:freeze} applies at $n=s-1=p-1-r$. Together with $G_r/p\not\equiv0\pmod p$, this gives
\[
 \widehat C_r\equiv\lambda\widehat A_r\pmod p.
\]
Substituting the reflection formulas into $\widehat C_r\equiv\lambda\widehat A_r$ and using $C_r\equiv\lambda A_r\pmod p$ gives
\[
 \left(\frac{C_{p-1}}p-\lambda\right)A_r
 \equiv-3\lambda A_r\pmod p.
\]
Since $A_r\not\equiv0\pmod p$, it follows that $C_{p-1}/p\equiv-2\lambda\pmod p$. By \eqref{eq:lambda}, this is equivalent to
\[
 C_{p-1}\equiv-\frac{2p}{21}B_{p-3}\pmod {p^2}.
\]
Together with $A_{p-1}/p\equiv-3\pmod p$, this also gives
\[
 \frac{C_{p-1}/p}{A_{p-1}/p}
 \equiv\frac{-2\lambda}{-3}
 =\frac23\lambda
 \equiv\frac{2}{63}B_{p-3}\pmod p.
\]
This proves the congruences for $A_{p-1}$ and $C_{p-1}$ in part~\textup{(i)} of the \cref{thm:main}.

\section{The \texorpdfstring{$q$}{q}-recurrence and the analytic limit}
\label{sec:cyclotomic}

In the proof of \cref{prop:endpoint}, a factor equal to $p$ made most terms vanish modulo $p$. We now replace integer factors by $q$-integers, so that the same terms vanish exactly when $q$ is a primitive $p$-th root of unity. The specialization of this $q$-family at $q=1$ recovers the preceding construction.

\subsection{The construction}
Some factors in $\rho_m^{(k)}$ and $T_m^{(k)}$ are negative. We extend the positive $q$-integers by their sign: for $a\in\mathbb Z$, define
\[
 \bq a=
 \begin{cases}
 [a]_q,&a>0,\\
 0,&a=0,\\
 -[-a]_q,&a<0.
\end{cases}
\]
At $q=1$, $\bq a=a$. For $m\geq1$, set $[m]_q!=\prod_{j=1}^m[j]_q$, and set $[0]_q!=1$. For $m\geq0$, put
\begin{align}
 E_m(q)&=\sum_{n=1}^m\frac{(-1)^{n-1}q^{2n}}{[n]_q^3},\label{eq:q-E}\\
 h_m(q)&=\frac{[m]_q!^2[2m]_q!}{[2]_q^{3m}}\,.\nonumber
\end{align}
For $k,m\geq0$, define
\begin{align}
 T_m^{(k)}(q)
 &=\frac{\bq 2}{\bq{2m-2k-1}},\label{eq:q-T}\\
 \rho_m^{(k)}(q)
 &=\frac{\bq{m-2k-2}\bq{2m+2k+1}}{\bq 2^2}
 \left(\bq m\bq{m-2k-2}+\bq{2k+2}^2\right)\,.\label{eq:q-rho}
\end{align}
\begin{definition}\label{def:q-family}
For $m\geq0$, set $U_m^{(0)}(q)=h_m(q)(E_m(q),1)$. Define the remaining pairs recursively for $k\geq0$ by
\begin{equation}\label{eq:q-BM}
 U_m^{(k+1)}(q)
 =T_m^{(k)}(q)
\left(U_{m+1}^{(k)}(q)
+\rho_{m+1}^{(k)}(q)U_m^{(k)}(q)\right)\,.
\end{equation}
\end{definition}
For example,
\[
 U_0^{(0)}(q)=(0,1),\qquad
 U_1^{(0)}(q)=\frac1{(1+q)^2}(q^2,1)
 =\bigl(q^2-2q^3+3q^4-\cdots,
        1-2q+3q^2-\cdots\bigr)\,.
\]
By induction on $k$, the coordinates of $U_m^{(k)}(q)$ are rational functions of $q$. Every denominator introduced above has a nonzero constant term, so $U_m^{(k)}(q)\in\Q(q)^2\cap\Q\llbracket q\rrbracket^2$.

At $q=1$, the quantities in \eqref{eq:q-T} and \eqref{eq:q-rho} are $T_m^{(k)}$ and $\rho_m^{(k)}$. Also $U_m^{(0)}(1)=U_m^{(0)}$, so the notation is compatible with the classical family in \cref{def:classical-family}: throughout the paper, $U_m^{(k)}$ means $U_m^{(k)}(1)$. It follows from \cref{prop:diagonal} that
\[
 U_n^{(n)}(1)
 =\frac{(n!)^7}{2^n}\left(\frac{21}{4}C_n,A_n\right)\,.
\]

Notice that the construction uses only the recursion in $k$. The second-order recurrence in $m$ from \cref{lem:BM} was needed to identify the diagonal at $q=1$.

\subsection{Evaluation at a root of unity}
We expand the diagonal by repeatedly applying the $q$-recursion.

\begin{proposition}\label{prop:q-root}
Let $p\geq11$ be prime, let $q_p=e^{2\pi i/p}$, and put $r=\lfloor(p+1)/4\rfloor$. Then the second coordinate of $U_r^{(r)}(q_p)$ is nonzero and
\[
 \frac{(U_r^{(r)}(q_p))_1}
      {(U_r^{(r)}(q_p))_2}
 =E_{(p-1)/2}(q_p)\,.
\]
\end{proposition}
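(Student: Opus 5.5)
The plan is to expand $U_r^{(r)}(q)$ completely in terms of the level-zero vectors $U_j^{(0)}(q)$ by iterating \eqref{eq:q-BM}, and then to show that at $q=q_p$ only the term with $j=M=(p-1)/2$ survives. Iterating \eqref{eq:q-BM} from $(r,r)$ down to level $0$ gives
\[
 U_r^{(r)}(q)=\sum_j c_j(q)\,U_j^{(0)}(q),
\]
where $c_j(q)$ is a sum over lattice paths. Each path step goes from a node $(m,k+1)$ either to $(m+1,k)$, with weight $T_m^{(k)}(q)$, or to $(m,k)$, with weight $T_m^{(k)}(q)\rho_{m+1}^{(k)}(q)$.

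\textbf{Step 1: everything is finite at $q_p$.} Before evaluating, I check that no denominator vanishes at $q_p$. The analogue of \cref{lem:integrality} should work: all nodes on the paths lie in the triangle $\mathcal T$, where $|2(m-k)-1|\le 4r-3<p$, so each $\bq{2m-2k-1}$ is a nonzero $q$-integer of absolute index less than $p$. The remaining denominators are $[2]_q$ and the $[n]_q$ in $E_j(q)$ with $n\le 2r<p$, and these are also nonzero at $q_p$. So every $c_j(q_p)$ and every $U_j^{(0)}(q_p)$ is a well-defined complex number.

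\textbf{Step 2: bookkeeping with the invariant $s=m+k$.} A horizontal step $(m,k+1)\to(m+1,k)$ preserves $s$, while a $\rho$-step $(m,k+1)\to(m,k)$ lowers $s$ by $1$. The $\rho$-step from a node of sum $s=m+k+1$ carries the factor $\bq{2(m+1)+2k+1}=\bq{2s+1}$. This factor vanishes at $q_p$ exactly when $2s+1=p$. Since a path ends at $(j,0)$, it ends with $s=j$, and it starts at $s=2r$.
\begin{itemize}
\item If $p=4r+1$, the very first $\rho$-step would occur at $s=2r$ and is killed. Only the purely horizontal path survives, so $U_r^{(r)}(q_p)=\bigl(\prod_{i=0}^{r-1}T_{r+i}^{(r-1-i)}(q_p)\bigr)U_M^{(0)}(q_p)$. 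The product is nonzero, since its factors are $\bq2/\bq{4i+1}$.
\item If $p=4r-1$, one $\rho$-step at $s=2r$ is allowed, but any second one, taken at $s=2r-1=(p-1)/2$, is killed. So only $j\in\{M,M+1\}$ survive. Here $h_{M+1}(q)$ contains $[2M+2]_q!=[p+1]_q!$, which contains $[p]_{q_p}=0$, while $E_{M+1}(q_p)$ is finite. Hence $U_{M+1}^{(0)}(q_p)=0$, and again $U_r^{(r)}(q_p)=c_M(q_p)U_M^{(0)}(q_p)$.
\end{itemize}

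\textbf{Step 3: nonvanishing.} I expect this to be the main obstacle, in the case $p=4r-1$. Here $c_M(q)$ is a sum of several path contributions: the path that takes its $\rho$-step first, and the paths that take it later. A direct closed form is therefore awkward, and I would avoid computing it. Instead, I note that $c_M(q_p)$ lies in $\Z[q_p]$ localized at the $q$-integers $\bq a$ with $0<|a|<p$ and at $[2]_q$, all of which are units modulo $1-q_p$. Its reduction modulo $1-q_p$ therefore equals $c_M(1)\bmod p$. The same path bookkeeping at $q=1$ modulo $p$ reproduces the $q=1$ computation behind \cref{prop:endpoint}. There, the formula $U_r^{(r)}\equiv\frac{21r^3}{2}\prod_{j=0}^{r-2}\frac{2}{4j+3}\,U_M^{(0)}$ holds, together with $U_{M+1}^{(0)}\equiv0\pmod p$ because $h_{M+1}$ contains the factor $p$. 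Since the second coordinate $h_M$ of $U_M^{(0)}$ is a $p$-adic unit, comparing second coordinates gives $c_M(1)\equiv\frac{21r^3}{2}\prod\frac{2}{4j+3}\not\equiv0\pmod p$, using $p\ge11$. Hence $c_M(q_p)\ne0$.

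This is exactly the use of the $q=1$ formula announced in \cref{rem:endpoint-vs-q}. For $p=4r+1$, nonvanishing is immediate from Step 2. In both cases $h_M(q_p)\ne0$, because $[n]_{q_p}\ne0$ for $n\le p-1$. So the second coordinate of $U_r^{(r)}(q_p)$ is nonzero, and the ratio of coordinates is $E_M(q_p)=E_{(p-1)/2}(q_p)$.
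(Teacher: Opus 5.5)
Your proposal is correct and follows the paper's proof essentially step for step. The expansion into level-zero vectors is the same, and so is the vanishing of the factor $\bq{2(m+1)+2k+1}$ at $q_p$; your invariant $s=m+k$ is just a relabeling of the paper's $\bq{4r+1-2t}$. You also use the same observation $U_{(p+1)/2}^{(0)}(q_p)=0$ when $p=4r-1$, and the same nonvanishing argument: reduce modulo $1-q_p$ and compare second coordinates with the explicit $q=1$ congruence from \cref{prop:endpoint}.
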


\begin{proof}
Repeated substitution in \eqref{eq:q-BM} expresses $U_r^{(r)}(q)$ as a sum of scalar multiples of vectors $U_j^{(0)}(q)$. At each step, choosing the first summand increases $m$ by $1$, while choosing the second leaves $m$ unchanged and contributes $\rho_{m+1}^{(k)}(q)$. After $\ell$ substitutions, a term in which the second summand has been chosen $t$ times is a scalar multiple of
\[
 U_{r+\ell-t}^{(r-\ell)}(q)\,.
\]
For the next substitution we have $m=r+\ell-t$ and $k=r-\ell-1$. The denominator of $T_m^{(k)}(q)$ is therefore
\[
 \bq{2m-2k-1}=\bq{4\ell-2t+1}\,.
\]
The integer $4\ell-2t+1$ lies between $1$ and $4r-3<p$, so none of the denominators of the $T$-factors vanishes at $q=q_p$. The remaining denominators are powers of $[2]_q$ and the factors $[j]_q$ in the terminal sums $E_m(q)$, where $m\leq2r<p$, so the whole expansion is regular at $q=q_p$ and we may specialize it term by term. If the second summand is chosen in the next substitution, the second factor in $\rho_{m+1}^{(k)}(q)$ is
\begin{equation}\label{eq:q-second-factor}
 \bq{2(m+1)+2k+1}=\bq{4r+1-2t}\,.
\end{equation}
After all $r$ substitutions, such a term is a scalar multiple of $U_{2r-t}^{(0)}(q_p)$.

Suppose first that $p=4r+1$. The first occurrence of the second summand contributes the factor $[p]_{q_p}=0$ by \eqref{eq:q-second-factor}. Therefore, only the term obtained by always choosing the first summand survives, and it is a scalar multiple of $U_{2r}^{(0)}(q_p)=U_{(p-1)/2}^{(0)}(q_p)$.

Now suppose that $p=4r-1$. The term obtained by always choosing the first summand is zero because it is a scalar multiple of $U_{2r}^{(0)}(q_p)=0$, since $h_{2r}(q_p)$ contains $[p]_{q_p}$ and $E_{2r}(q_p)$ has no pole. A term for which the second summand is chosen at least twice is also zero: at the second such choice, $t=1$ in \eqref{eq:q-second-factor}, so the coefficient contains the vanishing factor~$[p]_{q_p}$. The only possibly nonzero terms are therefore those for which the second summand is chosen exactly once, and all of them are scalar multiples of $U_{2r-1}^{(0)}(q_p)=U_{(p-1)/2}^{(0)}(q_p)$. So in both cases,
\begin{equation}\label{eq:q-proportional}
 U_r^{(r)}(q_p)=\gamma_p U_{(p-1)/2}^{(0)}(q_p)
\end{equation}
for some $\gamma_p\in\Q(q_p)$.

It remains to prove that $\gamma_p\neq0$. Put
\[
 \mathcal O_p=\mathbb Z[q_p]_{(1-q_p)}\,.
\]
Its maximal ideal is generated by $1-q_p$, and $\mathcal O_p/(1-q_p)\simeq\mathbb F_p$. For every integer $a$,
\[
 \bq a\big|_{q=q_p}\equiv a\pmod{1-q_p}.
\]
So each denominator above has nonzero residue in $\mathbb F_p$ and is a unit in $\mathcal O_p$, and therefore the scalar coefficients in the iterated expansion, and in particular $\gamma_p$, lie in $\mathcal O_p$ and reduce to their values at $q=1$. The element $h_{(p-1)/2}(q_p)$ is a unit in $\mathcal O_p$ as well, because all factors in its $q$-factorials have indices between $1$ and $p-1$. If $p=4r+1$, there is only one surviving term, and its coefficient is
\[
 \gamma_p=\prod_{j=0}^{r-1}\frac{[2]_{q_p}}{[4j+1]_{q_p}}\neq0\,.
\]

Let $p=4r-1$. Here the coefficients of the terms in which the second summand is chosen exactly once could cancel. To show that they do not, let $\overline\gamma_p\in\mathbb F_p$ be the residue of $\gamma_p$. Reducing \eqref{eq:q-proportional} modulo $1-q_p$ gives
\[
 U_r^{(r)}\equiv
 \overline\gamma_p U_{(p-1)/2}^{(0)}\pmod p.
\]
The explicit congruence obtained in the case $p=4r-1$ of the proof of \cref{prop:endpoint} is
\[
 U_r^{(r)}\equiv\frac{21r^3}{2}
 \left(\prod_{j=0}^{r-2}\frac2{4j+3}\right)U_{(p-1)/2}^{(0)}\pmod p.
\]
Comparing second coordinates and cancelling $h_{(p-1)/2}\not\equiv0\pmod p$ gives
\[
 \overline\gamma_p=
 \frac{21r^3}{2}\prod_{j=0}^{r-2}\frac2{4j+3}
 \not\equiv0\pmod p.
\]
To see this, notice that $p\nmid21r$ because $p\geq11$ and $0<r<p$, while $4j+3\leq4r-5=p-4$ throughout the product. This shows $\gamma_p\neq0$ in both cases. Finally, $U_{(p-1)/2}^{(0)}(q_p) =h_{(p-1)/2}(q_p)(E_{(p-1)/2}(q_p),1)$, and since both $\gamma_p$ and $h_{(p-1)/2}(q_p)$ are nonzero, the second coordinate in \eqref{eq:q-proportional} is nonzero. Taking the ratio of the two coordinates in \eqref{eq:q-proportional} gives the result.
\end{proof}

\subsection{The two limits}
We now prove the remaining assertions of the \cref{thm:main}. Since $q_p\equiv1$ and $[j]_{q_p}\equiv j$ modulo $1-q_p$,
\begin{equation}\label{eq:q-half-reduction}
 E_{(p-1)/2}(q_p)
 \equiv E_{(p-1)/2}
 \equiv\frac14B_{p-3}\pmod{1-q_p}
\end{equation}
by \cref{lem:half-sum}. Together with the quotient congruence in \Cref{sec:algebraic}, this proves the equality of algebraic limits and completes the proof of part~\textup{(i)} of the \cref{thm:main}.

For odd $N$ and $1\leq j\leq(N-1)/2$, the sine estimate from \cite{BTT} gives
\[
 |[j]_{q_N}|
 =\frac{\sin(\pi j/N)}{\sin(\pi/N)}
 \geq\frac{2j}{\pi}\,.
\]
Extend the summand by zero for $j>(N-1)/2$. For each fixed $j$, $q_N^{2j}/[j]_{q_N}^3\longrightarrow1/j^3$. Dominated convergence then gives
\[
 \lim_{\substack{N\to\infty\\N\ \mathrm{odd}}}
 E_{(N-1)/2}(q_N)
 =\sum_{j\geq1}\frac{(-1)^{j-1}}{j^3}
 =\frac34\zeta(3)\,.
\]
This proves part~\textup{(ii)} of the \cref{thm:main}.

\section{General odd weights}\label{sec:odd-weights}

Notice that the cancellation at roots of unity only depends on the transformation coefficients and the common second coordinate, and not on the weight of the alternating sum. We may therefore replace the alternating cubic sum by an alternating sum of any odd weight. Let $w\geq3$ be odd and put
\begin{equation}\label{eq:odd-E}
 E_m^{(w)}(q)
 =\sum_{n=1}^m\frac{(-1)^{n-1}q^{2n}}{[n]_q^w},
 \qquad E_m^{(w)}=E_m^{(w)}(1).
\end{equation}
For odd $N\geq5$, define
\begin{equation}\label{eq:odd-normalization}
 \mathcal E_N^{(w)}=E_{(N-1)/2}^{(w)}(q_N)\,.
\end{equation}
In particular, $E_m^{(3)}(q)=E_m(q)$ and $\mathcal E_N^{(3)}=\mathcal E_N$.

For each odd $w$, define the corresponding $q$-family by setting
\begin{equation}\label{eq:odd-family-initial}
 U_m^{(0,w)}(q)=h_m(q)\bigl(E_m^{(w)}(q),1\bigr)
\end{equation}
and
\begin{equation}\label{eq:odd-family-recursion}
 U_m^{(k+1,w)}(q)
 =T_m^{(k)}(q)
 \left(U_{m+1}^{(k,w)}(q)
 +\rho_{m+1}^{(k)}(q)U_m^{(k,w)}(q)\right)\,.
\end{equation}
For $w=3$, this is the family from \Cref{sec:cyclotomic}.

\pagebreak[3]
\begin{proposition}\label{prop:odd-weights}
Let $w\geq3$ be odd.
\begin{enumerate}[(i)]
\item If $p\geq11$ is prime and $r=\lfloor(p+1)/4\rfloor$, then the second coordinate of $U_r^{(r,w)}(q_p)$ is nonzero and
\begin{equation}\label{eq:odd-root}
 \frac{(U_r^{(r,w)}(q_p))_1}
      {(U_r^{(r,w)}(q_p))_2}
 =E_{(p-1)/2}^{(w)}(q_p)\,.
\end{equation}
\item If $p>w$ is prime, then
\begin{equation}\label{eq:odd-algebraic}
 \mathcal E_p^{(w)}
 \equiv\left(1-2^{1-w}\right)Z(w)_p\pmod{1-q_p}.
\end{equation}
Changing finitely many components does not change an element of $\cA$, so the algebraic limit is $(1-2^{1-w})Z(w)$.
\item The analytic limit is
\begin{equation}\label{eq:odd-analytic}
 \lim_{\substack{N\to\infty\\N\ \mathrm{odd}}}
 \mathcal E_N^{(w)}
 =\left(1-2^{1-w}\right)\zeta(w)\,.
\end{equation}
\end{enumerate}
\end{proposition}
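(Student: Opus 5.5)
For part~(i), the plan is to rerun the proof of \cref{prop:q-root} with $U^{(k)}$ replaced by $U^{(k,w)}$. That argument uses only three inputs: the recursion \eqref{eq:odd-family-recursion}, whose coefficients $T$ and $\rho$ do not depend on $w$; the vanishing of $h_{2r}(q_p)$ when $p=4r-1$; and the regularity of the terminal sums at $q_p$. The sum $E_{2r}^{(w)}(q_p)$ has denominators $[j]_{q_p}^w$ with $j\le 2r<p$, so it has no pole. Hence the same path expansion gives
\[
 U_r^{(r,w)}(q_p)=\gamma_p\,U_{(p-1)/2}^{(0,w)}(q_p).
\]
The scalar $\gamma_p$ is the \emph{same} as for $w=3$, because the path coefficients involve only $T$, $\rho$, and the vanishing pattern of $[p]_{q_p}$, never the first coordinate. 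In fact, the second coordinates of the two families coincide identically: both start from $h_m(q)$ and obey the same linear recursion. So $\gamma_p\neq0$ follows directly from \cref{prop:q-root}, with no new residue computation. Since $h_{(p-1)/2}(q_p)\ne0$, the second coordinate is nonzero, and taking the ratio of coordinates gives \eqref{eq:odd-root}.

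For part~(ii), reduce modulo $1-q_p$ exactly as in \eqref{eq:q-half-reduction}. This gives $\mathcal E_p^{(w)}\equiv E_M^{(w)}\pmod p$ with $M=(p-1)/2$, so what remains is to generalize \cref{lem:half-sum}.
\begin{itemize}
\item Pairing $j$ with $p-j$ and using that $w$ is odd gives $S\equiv 2E_M^{(w)}$, where $S=\sum_{j=1}^{p-1}(-1)^{j-1}j^{-w}$.
\item Separating the even terms gives $S=H_{p-1}^{(w)}-2^{1-w}H_M^{(w)}$. Here $H_{p-1}^{(w)}\equiv0$ because $p-1\nmid w$; this holds since $p>w$ and $w$ is odd, at least when $p>w+1$, which is automatic for $p>w$ prime with $p\ge5$.
\item Hence $E_M^{(w)}\equiv-2^{-w}H_M^{(w)}$.
\end{itemize}
Next, apply Seki--Bernoulli with exponent $k=p-1-w$ and $n=M+1\equiv1/2$:
\[
 H_M^{(w)}\equiv\frac{B_{p-w}(1/2)-B_{p-w}}{p-w}\equiv\frac{(2^{1-(p-w)}-2)B_{p-w}}{-w}.
\]
Fermat gives $2^{1-p+w}\equiv2^{w}$, so
\[
 H_M^{(w)}\equiv-\frac{(2^{w}-2)B_{p-w}}{w},
 \qquad
 E_M^{(w)}\equiv\frac{(1-2^{1-w})B_{p-w}}{w}=(1-2^{1-w})Z(w)_p .
\]
For $w=3$ this recovers $\tfrac34\cdot\tfrac{B_{p-3}}3=\tfrac14B_{p-3}$, which checks the constant. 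The von Staudt--Clausen integrality of $B_{p-w}(x)$ holds as before, since $p-w<p-1$.

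For part~(iii), use the same dominated convergence argument as in the weight-three case: the bound $|[j]_{q_N}|\ge 2j/\pi$ gives the summable majorant $(\pi/2)^w j^{-w}$. The limit is the alternating series $\sum_{j\ge1}(-1)^{j-1}j^{-w}=(1-2^{1-w})\zeta(w)$.

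The main obstacle is in part~(i): confirming that nothing in the nonvanishing of $\gamma_p$ in the case $p=4r-1$ depended on $w$. I plan to settle it by the observation above that the second coordinates of the weight-$w$ family are literally the weight-three ones, so $\gamma_p$ can be read off from second coordinates alone. A minor check in part~(ii) is that $p-1\nmid w$ for all primes $p>w$.
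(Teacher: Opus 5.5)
Your proposal is correct and follows the paper's proof essentially step for step. Part (i) uses the same path expansion with the $w$-independent scalar $\gamma_p$ and the identical second coordinates; part (ii) uses the same pairing, even-term separation and Seki--Bernoulli evaluation at $1/2$; part (iii) uses the same dominated-convergence bound $(\pi/2j)^w$. Your side condition $p-1\nmid w$ is automatic simply because $p-1$ is even and $w$ is odd.
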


\begin{proof}
The expansion used in the proof of \cref{prop:q-root} depends only on the factors $T_m^{(k)}(q)$ and $\rho_m^{(k)}(q)$. These factors are unchanged in \eqref{eq:odd-family-recursion}. The same expansion and vanishing argument give
\[
 U_r^{(r,w)}(q_p)
 =\gamma_pU_{(p-1)/2}^{(0,w)}(q_p)
\]
with the same scalar $\gamma_p$ as in \eqref{eq:q-proportional}. The initial second coordinate and the recursion are independent of $w$, so the second coordinate agrees with the one for $w=3$ and is nonzero by \cref{prop:q-root}. Taking coordinate ratios proves \eqref{eq:odd-root}.

We next calculate the reduction of the sum in \eqref{eq:odd-E}. Let $p>w$ be prime, put $M=(p-1)/2$, and write
\[
 H_n^{(w)}=\sum_{j=1}^n\frac1{j^w},
 \qquad
 S_w=\sum_{j=1}^{p-1}\frac{(-1)^{j-1}}{j^w}.
\]
Pairing the terms of indices $j$ and $p-j$ gives $S_w\equiv2E_M^{(w)}\pmod p$, since $w$ is odd. Separating the even indices and using $H_{p-1}^{(w)}\equiv0\pmod p$ gives
\[
 S_w
 =H_{p-1}^{(w)}-2^{1-w}H_M^{(w)}
 \equiv-2^{1-w}H_M^{(w)}\pmod p.
\]
The vanishing used here follows from $j^{-w}\equiv j^{p-1-w}\pmod p$ and $1\leq p-1-w\leq p-2$. This gives
\begin{equation}\label{eq:odd-E-H}
 E_M^{(w)}\equiv-2^{-w}H_M^{(w)}\pmod p.
\end{equation}

The Bernoulli polynomial summation formula gives
\[
 H_M^{(w)}
 \equiv
 \frac{B_{p-w}(M+1)-B_{p-w}}{p-w}\pmod p.
\]
By von Staudt--Clausen, the Bernoulli numbers occurring in $B_{p-w}(x)$ are $p$-integral, since their indices are at most $p-w\leq p-3$. Here $M+1\equiv1/2\pmod p$, and
\[
 B_{p-w}(1/2)
 =\left(2^{1-(p-w)}-1\right)B_{p-w}
 \equiv(2^w-1)B_{p-w}\pmod p.
\]
It follows that
\[
 H_M^{(w)}
 \equiv-\frac{2^w-2}{w}B_{p-w}\pmod p.
\]
Combining this with \eqref{eq:odd-E-H}, we obtain
\begin{equation}\label{eq:odd-half-sum}
 E_M^{(w)}
 \equiv\frac{1-2^{1-w}}wB_{p-w}
 =\left(1-2^{1-w}\right)Z(w)_p\pmod p.
\end{equation}
Since $q_p\equiv1$ and $[j]_{q_p}\equiv j$ modulo $1-q_p$, \eqref{eq:odd-half-sum} also gives
\[
 E_M^{(w)}(q_p)
 \equiv\left(1-2^{1-w}\right)Z(w)_p
 \pmod{1-q_p}.
\]
This proves \eqref{eq:odd-algebraic}. The finitely many primes $p\leq w$ do not affect the resulting element of $\cA$.

Finally, the estimate used in the proof of the \cref{thm:main} gives
\[
\left|\frac{q_N^{2j}}{[j]_{q_N}^w}\right|
\leq\left(\frac{\pi}{2j}\right)^w
\qquad\left(1\leq j\leq\frac{N-1}{2}\right).
\]
Extend the summand by zero for $j>(N-1)/2$. For each fixed $j$, $q_N^{2j}/[j]_{q_N}^w\longrightarrow j^{-w}$. Dominated convergence therefore yields
\[
 \lim_{\substack{N\to\infty\\N\ \mathrm{odd}}}
 E_{(N-1)/2}^{(w)}(q_N)
 =\sum_{j\geq1}\frac{(-1)^{j-1}}{j^w}
 =\left(1-2^{1-w}\right)\zeta(w)\,.
\]
By \eqref{eq:odd-normalization}, this is \eqref{eq:odd-analytic}.
\end{proof}

\vspace{1cm}

{\bf AI disclosure:} Some of the proof ideas came from ChatGPT~5.6~Sol, which also assisted with computations and intermediate proof steps together with Claude Fable~5.1. They also helped with the SageMath implementations and language checks. The author checked all proofs and takes full responsibility for the results.

\end{document}